
\documentclass[12pt]{amsart}
\textwidth 15cm
\oddsidemargin 0cm
\evensidemargin 0cm
\textheight 23.5cm
\topmargin -0.5cm

 \newtheorem{theorem}{Theorem}[section]
 \newtheorem{cor}[theorem]{Corollary}
 \newtheorem{thm}[theorem]{Theorem}
 \newtheorem{lem}[theorem]{Lemma}
 \newtheorem{prop}[theorem]{Proposition}
 \theoremstyle{definition}
 
 \theoremstyle{remark}
 \newtheorem{remark}[theorem]{Remark}

\newtheorem{notation}[theorem]{Notation}

\newtheorem{exam}[theorem]{Example}

\newtheorem*{question*}{Question}
 
 \numberwithin{equation}{section}

\usepackage{amsmath}
\usepackage{amsfonts}
\usepackage{amssymb}

\newcommand{\qf}[1]{\langle #1\rangle}
\newcommand{\pff}[1]{\langle\!\langle #1\rangle\!\rangle}
\newcommand{\qpf}[1]{\langle\!\langle #1]]}
\newcommand{\HH}{{\mathbb H}}

\newcommand{\BB}{{\mathcal B}}

\newcommand{\MM}{{\mathbb M}}

\newcommand{\implyr}{\Longrightarrow}
\newcommand{\implylr}{\Longleftrightarrow}

\renewcommand{\phi}{\varphi}

\DeclareMathOperator{\chr}{char}
\DeclareMathOperator{\rad}{Rad}
\DeclareMathOperator{\Aut}{Aut}
\DeclareMathOperator{\Gal}{Gal}
\DeclareMathOperator{\GL}{GL}
\DeclareMathOperator{\ani}{an}

\begin{document}

\title[Similarity of quadratic and symmetric bilinear forms]{Similarity
of quadratic and symmetric bilinear forms in characteristic $2$}

\author{Detlev W.~Hoffmann}\thanks{The
author is supported in part by DFG grant HO 4784/2-1
{\em Quadratic forms, quadrics, sums of squares and
Kato's cohomology in positive characteristic}.}
\address{Fakult\"at f\"ur Mathematik,
Technische Universit\"at Dortmund,
D-44221 Dortmund,
Germany}
\email{detlev.hoffmann@math.tu-dortmund.de}

\date{}

\begin{abstract}
We say that a field extension $L/F$ has the descent 
property for isometry (resp. similarity) of
quadratic or symmetric bilinear forms
if any two forms defined over $F$ that become
isometric (resp. similar) over $L$ are already
isometric (resp. similar) over $F$.
The famous Artin-Springer theorem states that anisotropic
quadratic or symmetric bilinear forms over a field stay
anisotropic over an odd degree field extension.  As a 
consequence, odd degree extensions have the descent property
for isometry
of quadratic as well as symmetric bilinear forms.
While this is well known for nonsingular quadratic forms,
it is perhaps less well known for arbitrary quadratic or
symmetric bilinear forms in characteristic $2$.  We 
provide a proof in this situation.  More generally, we
show that odd degree extensions also have the
descent property for similarity.  Moreover, for symmetric
bilinear forms in characteristic $2$, one even
has the descent property for isometry and for similarity 
for arbitrary separable algebraic extensions.  We also
show Scharlau's norm principle for arbitrary quadratic
or bilinear forms in characteristic $2$.
\end{abstract}

\subjclass[2010]{Primary: 11E04; Secondary 11E81}
\keywords{quadratic form; symmetric bilinear form;
isometry; similarity; field
extension; separable extension}

\maketitle

\section{Introduction}

Springer's celebrated theorem \cite{sp} from 1952 states that anisotropic
quadratic or bilinear forms stay anisotropic over
odd degree field extensions (all bilinear forms in this
paper are assumed to be symmetric).  This anisotropy behaviour
is true in any
characteristic, although in his note, Springer assumed the field
to be of characteristic not $2$.  Apparently, E.~Artin had already
communicated a proof of this result
to E.~Witt in 1937 (see \cite[p.~41]{w}).  As a consequence, one obtains
that quadratic (or bilinear) forms that become isometric
over an odd degree field extension are already isometric over
the base field.  This result is quite well known
for nonsingular quadratic forms (so for example
in characteristic not $2$ as already noted by Springer in his note),
but perhaps less so for 
arbitrary quadratic forms (singular or not) in characteristic $2$,
or for bilinear forms in characteristic $2$.

For arbitrary quadratic forms, we will provide a proof of
this result in Proposition \ref{qfoddiso} for the reader's
convenience.
For bilinear forms in characteristic $2$, the result
may be known to the reader when the forms are anisotropic,
where it amounts to the injectivity of the restriction
map $WF\to WL$ of the Witt rings of bilinear forms
for an odd degree extension $L/F$ (see \cite[Cor.~18.6]{ekm}).
However, in characteristic $2$, bilinear forms of the
same dimension that are Witt equivalent need not be isometric
(cf.~Proposition \ref{milnor}).  We will show
something stronger in Proposition \ref{sepiso},
namely, that bilinear forms in 
characteristic $2$ that become isometric over a \emph{separable}
algebraic extension are already isometric over the base field.

The question whether isometry descends from a field extension
to the base field has a natural generalization simply
by replacing ``isometry'' by ``similarity''.
If $\phi$ and $\psi$ are
quadratic forms over a field $F$ of characteristic not $2$
that become similar over an odd degree field extension
$L$ of $F$, then they are already similar over 
the base field $F$.  This can be shown using well
established transfer arguments applied to the Witt
ring as has been noted by Sivatski \cite[Prop.~1.1]{si}
and by Black and Qu\'eguiner-Mathieu \cite[p.~378]{bqm}

In characteristic not $2$, the arguments proposed by 
Sivatski resp. by Black and Qu\'eguiner-Mathieu rely
on the fact 
that, using the Scharlau transfer and its Frobenius 
reciprocity property, it essentially suffices
to work in the Witt rings of $L$ resp. $F$ and use the fact
that the isometry class of a quadratic form is uniquely
determind by its dimension and its Witt class.

This proof cannot simply be transferred to the case of 
characteristic $2$.  Several problems arise.  Firstly,
one has to distinguish between quadratic forms and
bilinear forms.  In the case of nonsingular quadratic forms, 
it turns out that these transfer arguments can essentially
also be used to get the same result in characteristic $2$
(we will also make use of this fact, see Corollary \ref{nondegs*}).
However, we also
want to consider singular quadratic forms, so 
Witt equivalence arguments alone won't suffice to get the
desired result since we don't have Witt cancellation
in general.  For bilinear forms, one may restrict
to the consideration of nonsingular forms, 
but we still have the problem
that the isometry class is in general not uniquely determined
by the Witt class and the dimension.  Rather, one also needs
to consider totally singular quadratic forms as follows from 
Milnor's criterion for isometry of bilinear forms
(Proposition \ref{milnor} below).

The analogue of the descent result for similarity of
arbitrary quadratic forms
(singular or not) in characteristic $2$ 
for odd degree extensions will be
shown in Theorem \ref{qfoddsim}.  For bilinear forms
in characteristic $2$, we again show a stronger result,
namely, that the similarity property descends from separable
extensions to the base field, see Theorem \ref{sepsim}.
A crucial ingredient needed to obtain this strengthening is
the perhaps somewhat surprising fact that in
characteristic $2$, the group of similarity factors of
a bilinear form over a field $F$ is given by
the multiplicative group of some field extension of $F^2$
inside $F$.  This will be shown in Theorem \ref{bilsim}.
An analogous result for totally singular quadratic forms 
has been known, but it seems to be new for bilinear forms
in characteristic $2$.

Recall that Scharlau's norm principle for quadratic forms $q$
over a field $F$ of characteristic not $2$ states that if
$L/F$ is a finite extension and if $\lambda\in L^*$ is a similarity
factor of $q$ over $L$, then the norm $N_{L/K}(\lambda)$
is a similarity factor of $q$ over $F$ 
(see, e.g., \cite[Ch.~VII, Th.~4.3]{lam}).  
In characteristic $2$,
there are analogous
versions for nonsingular quadratic forms and anisotropic
bilinear forms (see \cite[Theorems 20.14, 20.17]{ekm}.
Our methods allow us to show that,
in fact, it holds for arbitrary quadratic and bilinear forms.

In \S~2, we collect some of the basic definitions and
facts regarding bilinear and quadratic forms in
characteristic $2$. In \S~3, we study similarity factors
of quadratic and in particular bilinear forms in characteristic $2$.
The descent property of odd degree extensions for isometry of
quadratic forms and of separable extensions for isometry of
bilinear forms will be established in \S~4.  The descent
property of separable extensions (resp. odd degree extensions)
for similarity of bilinear forms (resp. quadratic forms)
will be treated in \S~5 (resp. \S~6). The full characteristic $2$
version of Scharlau's norm principle will be shown in \S~7.

\section{Basic definitions and facts}
We refer the reader to \cite{ekm} concerning
all concepts and facts that we mention in the sequel
or that we use without explicit reference.
Unless stated otherwise, all fields we consider will be
of characteristic $2$. By a bilinear form over a field $F$,
we will always mean a symmetric bilinear form $b:V\times V\to F$
defined on a finite-dimensional $F$-vector space $V$.
If $b,b'$ are bilinear forms, we have the standard notions
of isometry of $b$ and $b'$, denoted by $b\cong b'$, and
of  their orthgonal sum $b\perp b'$.  They are called
similar (over $F$) if there exists some $\lambda\in F^*$
such that $b\cong \lambda b'$. 
The radical of $b$ is defined by
$\rad(b)=\{ x\in V\,|\,b(x,y)=0\ \forall y\in V\}$.
$b$ is called nonsingular (or regular) if $\rad(b)=\{ 0\}$.
If $\dim V=n$ and $\BB=(e_i)_{1\leq i\leq n}$ is a
basis of $V$, then we call $G_{b,\BB}=\bigl(b(e_i,e_j)\bigr)\in M_n(K)$
the Gram matrix of $b$ with respect to $V$.
Nonsingularity is equivalent to $\det(G_{b,\BB})\neq 0$.
Each $b$ decomposes into an orthogonal sum of a 
nonsingular part and its radical, and since isometries
of bilinear forms map radicals bijectively onto radicals
and induce isometries on the nonsingular parts,
we may assume for all our purposes that all bilinear forms
we consider will be nonsingular unless stated otherwise.
$b$ is called diagonalizable if the basis $\BB$ can be
chosen so that $G_{b,\BB}$ is a diagonal matrix
(in which case $\BB$ is called an orthogonal basis),
and we write $b\cong\qf{a_1,\ldots,a_n}_b$ where the 
$a_i\in F$ are the diagonal entries of $G_{b,\BB}$.

The value set of $b$ is defined to be 
$D_F(b)=\{ b(x,x)\,|\,x\in V\setminus\{ 0\}\}$, and we
put $D_F^0(b)=D_F(b)\cup\{ 0\}$ and 
$D_F^*(b)=D_F(b)\cap F^*$.  $b$ is called
isotropic if $D_F^0(b)=D_F(b)$, anisotropic otherwise
(i.e., $D_F^*(b)=D_F(b)$).
The isotropic binary bilinear forms are exactly the
metabolic planes $\MM(a)$ that have a Gram matrix of type
$\begin{pmatrix} 0 & 1 \\ 1 & a\end{pmatrix}$ for some
$a\in F$.  A bilinear form is called metabolic if it
is isometric to an orthogonal sum of metabolic planes:
$$\MM(a_1,\ldots,a_m):=\MM(a_1)\perp\ldots\perp\MM(a_n).$$
$\HH_b=\MM(0)$ is a called a hyperbolic plane, and a hyperbolic
bilinear form is a form of type $\MM(0,\ldots,0)$.
$b$ is hyperbolic iff $D_F^0(b)=\{ 0\}$ iff $b$
is not diagonalizable.  A bilinear form $b$ decomposes
as $b\cong b_m\perp b_{\ani}$ with $b_m$ metabolic
and $b_{\ani}$ anisotropic.  The anisotropic part is uniquely
determined up to isometry, but the metabolic part is not
(cf.~Proposition \ref{milnor} below).
We call $b$ and $b'$ Witt equivalent if $b_{\ani}\cong b'_{\ani}$.
The Witt equivalence classes form the (bilinear) Witt ring
$WF$ of $F$ with addition induced by the orthogonal sum and
multiplication induced by the tensor product $b\otimes b'$
of bilinear forms.  If $K/F$ is a field extension,
then the natural scalar extension of $b$ to $V_K=V\otimes K$
is denoted by $b_K$.  This induces a ring homomorphism
$r^*_{K/F}:WF\to WK$ called restriction.

A quadratic form $q$ on the $F$-vector space $V$ is a map
$q:V\to F$ such that
$q(\lambda x)=\lambda^2q(x)$ for all $\lambda\in F,x\in V$,
and such that 
$$b_q(x,y)=q(x+y)-q(x)-q(y),\ x,y\in V$$
defines a bilinear form on $V$.  We have again the usual
notions of isometries and orthogonal sums of quadratic forms
(where orthogonality is defined with respect to the 
associated bilinear form).  The radical of $q$ is defined
as $\rad(q):=\rad(b_q)$, and $q$ is said to be
nonsingular (or regular) if $\rad(q)=0$, and totally singular if
$\rad(q)=V$.  $q$ is nonsingular iff $q$ is isometric
to an orthogonal sum of binary quadratic forms of type
$[a_1,a_2]=a_1x^2+xy+a_2y^2$, $a_1,a_2\in F$.
Totally singular quadratic forms are exactly
the forms isometric to forms of type 
$$\qf{a_1\ldots,a_m}=a_1x_1^2+\ldots +a_mx_m^2.$$
Again, we define the value sets 
$D_F(q)=\{ q(x)\,|\,x\in V\setminus\{ 0\}\}$,
$D_F^0(q)=D_F(q)\cup\{ 0\}$ and $D_F^*(q)=D_F(q)\cap F^*$, 
and  we call $q$ isotropic
if $D_F^0(q)=D_F(q)$, anisotropic otherwise (i.e., $D_F^*(q)=D_F(q)$).
Also, we call $q$ a zero form if $\dim(q)=0$ or if
$D_F(q)=\{ 0\}$ (in which case $q$ is totally singular).
Note that if $q\cong\qf{a_1,\ldots,a_m}$ is totally singular, then
$D_F^0(q)=\sum_{i=1}^ma_iF^2$ is a finite-dimensional
$F^2$-linear supspace of $F$ which essentially determines
the isometry class of $q$, see Proposition \ref{qfwittiso} below.

A binary nonsingular quadratic form is isotropic iff
it is a (quadratic) hyperbolic plane $\HH$, i.e., 
isometric to a form of type $[0,0]\cong[0,a]$, $a\in F$.
A hyperbolic quadratic form is a form that is
isometric to an orthogonal sum of hyperbolic planes. 
Each quadratic form decomposes into
$$q\cong q_h\perp q_r\perp q_s\perp q_0,$$
where $q_0\cong\qf{0,\ldots,0}$ is a totally singular
zero form, $q_h$ is hyperbolic, $q_r$ is
nonsingular and $q_s$ is totally singular with 
$q_r\perp q_s$ anisotropic.  This anisotropic
part in such a decomposition is uniquely determined up
to isometry and called the anisotropic part $q_{\ani}$ of
$q$.  $q_s\perp q_0$ is the restriction of $q$ to
its radical and thus also uniquely determined,
and the anisotropic part $q_s$ of the radical is also
uniquely determined up to isometry. Also, $q_h$ is
uniquely determined in such a decomposition, however,
$q_r$ is generally not uniquely determined.
Counterexamples can easily be constructed by using
the formula
$$[a,b]\perp\qf{c}\cong [a,b+c]\perp\qf{c}.$$
This shows in particular that the Witt cancellation rule
$$q_1\perp q_2\cong q_1\perp q_3\quad\implyr\quad q_2\cong q_3$$
fails in general if $q_1$ is singular.  However,
this Witt cancellation rule \emph{does}
hold if $q_1$ is nonsingular (see, e.g., \cite[Th.~8.4]{ekm}).

We say that two quadratic forms $q,q'$ are Witt equivalent
iff $q_{\ani}\cong q'_{\ani}$.  The Witt classes of
nonsingular quadratic forms correspond to the elements of
the Witt group $W_qF$ of quadratic forms with addition
induced by the orthogonal sum.  $W_qF$ becomes a $WF$-module
induced by the tensor product of a (nonsingular)
bilinear form with a nonsingular quadratic form.
In particular, $\qf{a}_b\otimes q\cong aq$ (scaling with
$a\in F^*$).  Again, if $K/F$ is a field extension,
then $q$ gives naturally  rise to a quadratic form $q_K=q\otimes K$
and we have again a restriction map $r^*_{K/F}:W_qF\to W_qK$.

We have the following result relating Witt equivalence to
isometry of quadratic forms.
\begin{prop}\label{qfwittiso}
Let $q$ and $q'$ be quadratic forms over $F$ of the same
dimension.
\begin{enumerate}
\item[(i)]  If $q$ and $q'$ are totally singular, then 
$$q\sim q'\quad\implylr\quad q\cong q'\quad\implylr
\quad D_F^0(q)=D_F^0(q').$$
\item[(ii)]  If $q\cong q_r\perp q_s$ and 
$q'\cong q'_r\perp q'_s$ with $q_r,q'_r$
nonsingular and $q_s,q'_s$ totally singular.
Then 
$$\begin{array}{rcl}
q\cong q' & \implylr & q\sim q'\ \mbox{and}\ 
\dim q_s=\dim q'_s\\
 & \implylr & q_s\cong q'_s\ \mbox{and}\ 
q'_r\perp q_r\perp q_s\sim q_s.
\end{array}$$
\end{enumerate}
\end{prop}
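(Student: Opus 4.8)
The plan is to reduce everything to two structural facts about a single quadratic form $q$ over $F$. \emph{Fact A:} in any decomposition $q\cong q_r\perp q_s$ with $q_r$ nonsingular and $q_s$ totally singular the summand $q_s$ is exactly the restriction of $q$ to its radical (indeed $\rad(q_r\perp q_s)=\rad(q_s)$ because $q_r$ is nonsingular), so $q_s$ is determined by $q$ up to isometry and $\dim q_s=\dim\rad(q)$. \emph{Fact B:} $q$ is already determined up to isometry by the isometry class of $q_{\ani}$ together with the numbers $\dim q$ and $\dim\rad(q)$. Fact B follows from the decomposition $q\cong q_{\ani}\perp q_h\perp q_0$ recalled in \S2 (here $q_h$ hyperbolic, $q_0$ a totally singular zero form): since $\rad(q_h)=0$ and $\rad(q_0)=q_0$ one has $\dim q_0=\dim\rad(q)-\dim\rad(q_{\ani})$ and then $\dim q_h=\dim q-\dim q_{\ani}-\dim q_0$, while a hyperbolic form (resp.\ a totally singular zero form) is determined by its dimension. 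A byproduct I will use repeatedly is that adjoining a hyperbolic summand or a copy of $\qf 0$ to a form changes neither $q_{\ani}$ nor the Witt class; in particular $\sim$ is compatible with $\perp$, and $q\perp h\sim q$ for $h$ hyperbolic, $h\perp t\sim t$ for $t$ totally singular.

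For (i) I would first note that a totally singular isotropic form splits off a $\qf 0$: if $\sum a_ix_i^2=0$ nontrivially with $x_m\neq 0$, then replacing the last basis vector by $\sum x_ie_i$ yields an orthogonal basis (the associated bilinear form vanishes) whose last diagonal entry is $0$. Iterating, $q\cong q_{\ani}\perp(\dim q-\dim q_{\ani})\qf 0$ whenever $q$ is totally singular, so $D_F^0(q)=D_F^0(q_{\ani})$. Moreover a totally singular form is anisotropic precisely when its diagonal entries are $F^2$-linearly independent; hence for anisotropic totally singular $q$ those entries form an $F^2$-basis of $D_F^0(q)$, so $\dim q_{\ani}=\dim_{F^2}D_F^0(q_{\ani})=\dim_{F^2}D_F^0(q)$, and two anisotropic totally singular forms with equal value sets are isometric, a change of $F^2$-basis $b'_j=\sum_i c_{ij}^2 b_i$ being realised by the invertible $F$-linear substitution $x_i\mapsto\sum_j c_{ij}x_j$. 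The three conditions in (i) are now equivalent: $q\cong q'\implyr q\sim q'$ is immediate; $q\sim q'$ gives $q_{\ani}\cong q'_{\ani}$, hence $D_F^0(q)=D_F^0(q_{\ani})=D_F^0(q'_{\ani})=D_F^0(q')$; and if $D_F^0(q)=D_F^0(q')$ then $q_{\ani}$ and $q'_{\ani}$ are anisotropic totally singular with equal value sets, so $q_{\ani}\cong q'_{\ani}$, and then, as $\dim q=\dim q'$, also $q\cong q'$.

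For (ii) write the three displayed conditions as (C1) $q\cong q'$; (C2) $q\sim q'$ and $\dim q_s=\dim q'_s$; (C3) $q_s\cong q'_s$ and $q'_r\perp q_r\perp q_s\sim q_s$; I would prove the cycle (C1)$\implyr$(C3)$\implyr$(C2)$\implyr$(C1). One more ingredient is needed: $q'_r\perp q'_r$ is hyperbolic, because the diagonal $\{(v,v)\}$ is a totally isotropic subspace of the nonsingular form $q'_r\perp q'_r$ of dimension $\tfrac12\dim(q'_r\perp q'_r)$. Then: (C1)$\implyr$(C3), since $q\cong q'$ forces $q_s\cong q'_s$ by Fact A, and $q'_r\perp q_r\perp q_s=q'_r\perp q\cong q'_r\perp q'=(q'_r\perp q'_r)\perp q'_s\sim q'_s\cong q_s$. (C3)$\implyr$(C2): clearly $\dim q_s=\dim q'_s$, and since $q'_r\perp q=q'_r\perp q_r\perp q_s\sim q_s$ we get $q\sim(q'_r\perp q'_r)\perp q=q'_r\perp(q'_r\perp q)\sim q'_r\perp q_s\cong q'_r\perp q'_s\cong q'$. (C2)$\implyr$(C1): here $q_{\ani}\cong q'_{\ani}$ and, by Fact A, $\dim\rad(q)=\dim q_s=\dim q'_s=\dim\rad(q')$, while $\dim q=\dim q'$, so $q\cong q'$ by Fact B.

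I expect the main obstacle to be the bookkeeping rather than any single computation: one must be careful about which constituents of a decomposition of a possibly singular quadratic form are canonical — the nonsingular summand is not, while the totally singular summand (the restriction to the radical), the hyperbolic part, and the isometry class of $q_{\ani}$ are — and, lacking Witt cancellation, one must move between $q$ and $q'$ via the device of adjoining $q'_r$ twice, which is harmless precisely because $q'_r\perp q'_r$ is hyperbolic. Once Facts A and B and the hyperbolicity of $q'_r\perp q'_r$ are in place, the remaining steps are routine.
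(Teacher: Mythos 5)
Your proposal is correct, and its core ingredients coincide with the paper's: the canonicity of the totally singular summand as the restriction to the radical (your Fact A), dimension counting on the decomposition $q\cong q_h\perp q_{\ani}\perp q_0$ (your Fact B is exactly the paper's argument for ``$q\sim q'$ and $\dim q_s=\dim q'_s$ imply $q\cong q'$''), the hyperbolicity of $q'_r\perp q'_r$, and the device of adjoining $q'_r$. The two genuine differences are organizational rather than conceptual. First, you arrange (ii) as a single cycle (C1)$\implyr$(C3)$\implyr$(C2)$\implyr$(C1), so that the implication (C3)$\implyr$(C1) is routed through (C2) and Fact B; the paper instead proves (C3)$\implyr$(C1) directly, upgrading $q'_r\perp q_r\perp q_s\sim q_s$ to an isometry $q'_r\perp q_r\perp q_s\cong q'_r\perp q'_r\perp q'_s$ by dimension count and then cancelling the nonsingular form $q'_r$ via Witt cancellation. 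Your route avoids that explicit appeal to cancellation (though the uniqueness of $q_{\ani}$ underlying Fact B rests on the same foundations), at the cost of the small extra lemma that $\sim$ is compatible with $\perp$ for possibly singular forms — which you correctly reduce to the fact that adjoining hyperbolic planes and copies of $\qf{0}$ does not change the anisotropic part. Second, for part (i) you supply a complete elementary proof (splitting off $\qf{0}$ from an isotropic totally singular form, anisotropy as $F^2$-linear independence of the diagonal entries, and realizing a change of $F^2$-basis of $D_F^0(q)$ by an invertible $F$-linear substitution), whereas the paper simply cites the literature; your argument is the standard one and is correct.
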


\begin{proof}
(i)  See \cite[p.~56]{ekm} or \cite[Prop.~8.1]{hl}
(or \cite[Prop.~2.6]{h-pf} for a more general statement in 
arbitrary positive characteristic).

(ii) If $q\cong q'$ then clearly $q\sim q'$ by the 
uniqueness (up to isometry) of the anisotropic part,
and also $\dim q_s=\dim\rad(q)=\dim\rad(q')=\dim q'_s$.

For the converse, we use the above
decomposition $q\cong Q_h\perp Q_r\perp Q_s\perp Q_0$
with $Q_h$ hyperbolic, $Q_0$ a zero form and $Q_r\perp Q_s$
the anisotropic part with $Q_r$ nonsingular and $Q_s$ totally
singular
(and analogously for $q'$).  Note that
$q\sim q'$ means $Q_r\perp Q_s\cong Q'_r\perp Q'_s$,
in particular, $\dim (Q_r\perp Q_s)=\dim (Q'_r\perp Q'_s)$
and $\dim Q_s=\dim Q'_s$ since the radicals of the anisotropic
parts must also have the same dimension.  Together with
$\dim q=\dim q'$ and 
$\dim (Q_s\perp Q_0)=\dim q_s=\dim q'_s=\dim (Q'_s\perp Q'_0)$
we get $\dim  Q_0=\dim Q'_0$ and $\dim Q_h=\dim Q'_h$ and thus,
$Q_0\cong Q'_0$ and $Q_h\cong Q'_h$.

Now if $q\cong q'$ then $q_s\cong q'_s$, since the latter
are just the restrictions of the forms to their respective radical
and because isometries restrict to isometries on the radicals.
Then the nonsingular parts also have the same dimension
$m=\dim q_r=\dim q'_r$.  We then have
$$\begin{array}{rcl}
q'_r\perp q_r\perp q_s & \cong & 
q'_r\perp q'_r\perp q'_s\\
 & \cong & (m\times\HH)\perp q'_s\\
 & \cong & (m\times\HH)\perp q_s,\end{array}$$
hence $q'_r\perp q_r\perp q_s\sim q_s\cong q'_s$.

Conversely, if
this holds,
then by dimension count we must have
$$q'_r\perp q_r\perp q_s\cong (m\times\HH)\perp q'_s
\cong q'_r\perp q'_r\perp q'_s$$
and Witt cancellation ($q'_r$ is nonsingular!) yields
$q\cong q'$.
\end{proof}

We also have to relate Witt equivalence to isometry in the case
of bilinear forms.  If $b$ is a bilinear form on the $F$-vector
space $V$, then the quadratic form $q_b:V\to F$ associated to
$b$ and defined by $q_b(x)=b(x,x)$ is a totally singular quadratic form.
If we decompose $b$ in its anisotropic part (which is diagonalizable)
and a metabolic part as above, say,
$$b\cong \MM(a_1,\ldots,a_m)\perp\qf{a_{m+1},\ldots,a_n}_b,$$
then we have
$$q_b\cong\qf{\underbrace{0,\ldots,0}_m,a_1,a_2,\ldots,a_n}.$$
The following criterion essentially goes back to Milnor.

\begin{prop}[Milnor \cite{m}]\label{milnor}
Let $\alpha$ and $\beta$ be bilinear forms over $F$ of the same
dimension.  Then $\alpha\cong\beta$ if and only if 
$\alpha_{\ani}\cong\beta_{\ani}$ and $q_\alpha\cong q_\beta$.
\end{prop}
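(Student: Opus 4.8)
The plan is to prove both implications by reducing to the Witt decomposition $\alpha\cong\alpha_m\perp\alpha_{\ani}$, $\beta\cong\beta_m\perp\beta_{\ani}$ with metabolic parts $\alpha_m,\beta_m$ and anisotropic (hence diagonalizable) parts $\alpha_{\ani},\beta_{\ani}$. One direction is immediate: if $\alpha\cong\beta$, then isometries preserve the Witt decomposition up to isometry, so $\alpha_{\ani}\cong\beta_{\ani}$; and since $q_\alpha$ is just the totally singular quadratic form $x\mapsto\alpha(x,x)$, an isometry of bilinear forms is in particular an isometry of the associated quadratic forms, so $q_\alpha\cong q_\beta$. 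The substance is the converse.

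So assume $\alpha_{\ani}\cong\beta_{\ani}$ and $q_\alpha\cong q_\beta$, and $\dim\alpha=\dim\beta=n$. Write $\alpha_{\ani}\cong\qf{a_1,\ldots,a_r}_b$; then from the second paragraph of the excerpt, $q_\alpha\cong\qf{\underbrace{0,\ldots,0}_{n-r},a_1,\ldots,a_r}$, and similarly $q_\beta\cong\qf{\underbrace{0,\ldots,0}_{n-s},a_1,\ldots,a_r}$ where $s=\dim\beta_{\ani}$ (using $\alpha_{\ani}\cong\beta_{\ani}$, so $s=r$ and the diagonal entries agree). First I would observe that $\dim\alpha_m=n-r=n-s=\dim\beta_m$, so the metabolic parts have the same even dimension $2m$, say; thus it suffices to show that \emph{any} two metabolic forms of the same dimension with isometric associated totally singular quadratic forms are isometric, since then $\alpha_m\cong\beta_m$ and $\alpha\cong\alpha_m\perp\alpha_{\ani}\cong\beta_m\perp\beta_{\ani}\cong\beta$.

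For the metabolic case, write $\alpha_m\cong\MM(c_1,\ldots,c_m)$ and $\beta_m\cong\MM(d_1,\ldots,d_m)$. Then $q_{\alpha_m}\cong\qf{0,\ldots,0,c_1,\ldots,c_m}$ with $m$ zeros, and similarly for $\beta_m$; the hypothesis $q_{\alpha_m}\cong q_{\beta_m}$ gives, via Proposition~\ref{qfwittiso}(i), that $\sum c_iF^2=\sum d_jF^2$ as $F^2$-subspaces of $F$. The key computational fact I would use is the identity $\MM(a)\perp\MM(b)\cong\MM(a+b)\perp\MM(ab)$ together with $\MM(a)\cong\MM(a+x^2)$ for any $x$ (both are elementary Gram-matrix manipulations, and in fact both can be extracted from Milnor's paper \cite{m} or checked directly), which shows that the isometry class of $\MM(c_1,\ldots,c_m)$ depends only on the $F^2$-subspace $\sum c_iF^2$ and on $m$. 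Hence the equality of these subspaces forces $\alpha_m\cong\beta_m$. The main obstacle is precisely assembling this last step cleanly: one must verify that the $F^2$-span together with the number of planes is a \emph{complete} isometry invariant of metabolic forms, and the cleanest route is to first bring both forms into a normal form $\MM(e_1,\ldots,e_k,0,\ldots,0)$ where $e_1,\ldots,e_k$ is an $F^2$-basis of the common span $U=\sum c_iF^2=\sum d_jF^2$ and the remaining $m-k$ planes are hyperbolic; the pairwise identities above let one transform any $\MM(c_1,\ldots,c_m)$ with $\sum c_iF^2=U$ into this normal form, and symmetry finishes the argument.
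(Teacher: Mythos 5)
The paper itself gives no proof of this proposition---it is quoted from Milnor's paper with a citation---so your proposal has to stand on its own, and unfortunately it contains a genuine gap, indeed several compounding errors. The central one is the reduction to ``$\alpha_m\cong\beta_m$''. The hypotheses do \emph{not} force the metabolic parts to be isometric, and your derivation of $q_{\alpha_m}\cong q_{\beta_m}$ rests on a wrong formula: if $\alpha\cong\MM(c_1,\ldots,c_m)\perp\qf{a_1,\ldots,a_r}_b$, then by the formula displayed just before the proposition $q_\alpha\cong\qf{0,\ldots,0,c_1,\ldots,c_m,a_1,\ldots,a_r}$ with $m$ zeros (where $2m=n-r$), not $n-r$ zeros---the parameters of the metabolic part enter, so $q_\alpha$ is not determined by $\alpha_{\ani}$ alone and $q_\alpha\cong q_\beta$ does not yield $q_{\alpha_m}\cong q_{\beta_m}$. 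Concretely, take $a\in F^*$ and $\alpha=\MM(a)\perp\qf{a}_b$, $\beta=\MM(0)\perp\qf{a}_b$: then $\alpha_{\ani}\cong\beta_{\ani}\cong\qf{a}_b$ and $q_\alpha\cong\qf{0,a,a}\cong\qf{0,0,a}\cong q_\beta$, and indeed $\alpha\cong\beta$ as the proposition predicts, yet $\MM(a)\not\cong\MM(0)$ since only the latter is hyperbolic. This is exactly the non-uniqueness of the metabolic part that the paper points out immediately before stating the proposition, so any correct proof must allow the metabolic complements to differ.

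The two ``elementary Gram-matrix identities'' you invoke are also both false. Since $q_{\MM(a)}\cong\qf{0,a}$, the isometry class of $\MM(a)$ determines the line $aF^2$; hence $\MM(a)\cong\MM(a+x^2)$ would force $(a+x^2)F^2=aF^2$, which fails for instance for $\MM(t)$ versus $\MM(t+1)$ over $\FF_2(t)$. Likewise $\MM(a)\perp\MM(b)\cong\MM(a+b)\perp\MM(ab)$ would force $aF^2+bF^2=(a+b)F^2+abF^2$, which fails for $a=s$, $b=t$ over $\FF_2(s,t)$, since $s$ is not an $F^2$-linear combination of $s+t$ and $st$. So even the special case you reduce to---metabolic forms of equal dimension with isometric associated quadratic forms are isometric, which \emph{is} a true consequence of the proposition---is not established by your normal-form argument. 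The relations that actually hold are of a different shape (for instance $\MM(a)\cong\MM(ax^2)$ for $x\in F^*$, and moves mixing the metabolic and diagonal parts such as $\MM(a)\perp\qf{a}_b\cong\MM(0)\perp\qf{a}_b$); assembling them into a complete isometry invariant is essentially the content of Milnor's theorem and cannot be obtained from two-plane manipulations confined to the metabolic part.
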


Finally, recall that an $n$-fold bilinear Pfister form
is a form of type $\pff{a_1,\ldots,a_n}_b=
\qf{1,a_1}_b\otimes\ldots\otimes\qf{1,a_n}_b$ and
an $(n+1)$-fold quadratic Pfister form is a form of
type $\qpf{a_1,\ldots,a_n,a_{n+1}}=
\pff{a_1,\ldots,a_n}_b\otimes [1,a_{n+1}]$ ($a_1,\ldots,a_n\in F^*$,
$a_{n+1}\in F$). An $n$-fold quasi-Pfister form is a totally
singular quadratic form of type 
$\pff{a_1,\ldots,a_n}=\qf{1,a_1}\otimes\ldots\otimes\qf{1,a_n}$
(here, we allow $a_i=0$).
Such (quasi-)Pfister forms have many nice properties, for example,
they are round, i.e., the nonzero values they represent are exactly
their similarity factors, and a bilinear (quadratic) Pfister form
is isotropic iff it is metabolic (hyperbolic).

\section{Relative similarity factors and the group of similarity factors}

Let $\phi$ and $\psi$ be bilinear
(resp. quadratic) forms over $F$ of the same 
dimension.  We define the set of relative similarity factors of
$\phi$ and $\psi$ as
$$G_F(\phi,\psi)=\{ c\in F^*\,|\,\phi\cong c\psi\}.$$
$G_F(\psi)=G_F(\psi,\psi)$ is the group of similarity
factors of $\psi$.  Note that $F^{*2}\subseteq G_F(\psi)$ and that
therefore we have  $G_F(\phi,\psi)=G_F(\psi,\phi)$, and if
$G_F(\phi,\psi)$ is nonempty, say, $c\in G_F(\phi,\psi)$,
then $G_F(\phi,\psi)=cG_F(\psi)$ is just a coset of 
the subgroup $G_F(\psi)\leq F^*$.
For later purposes, we put $G_F^0(\psi)=G_F(\psi)\cup\{ 0\}$
and $G_F^0(\phi,\psi)=G_F(\phi,\psi)\cup\{ 0\}$.
Note that if $\dim\psi =0$, then by definition
$G_F(\psi)=F^*$ and $G_F^0(\psi)=F$.

If $\psi$ is a bilinear or quadratic Pfister form or
a (totally singular) quasi-Pfister form, then
$G_F(\psi)=D^*_F(\psi)$ by the roundness property of 
(quasi-)Pfister forms (see \cite[Cor.~6.2, Cor.~9.9, Cor.~10.3]{ekm},
or \cite[Prop.~8.5]{hl}, \cite[Prop.~4.6]{h-pf} for quasi-Pfister forms).
Furthermore, for a quasi-Pfister form $\psi\cong\pff{a_1,\ldots,a_m}$
we have $G_F^0(\psi)=F^2(a_1,\ldots,a_m)$, a field
extension of $F^2$  of degree $2^k\leq 2^m$ (with equality iff
$\psi$ is anisotropic).  Here, we allow $m=0$, in which case 
$\psi=\qf{1}$ and $G_F^0(\psi)=F^2$.

For similarity factors of arbitrary totally singular quadratic forms,
we have the following.

\begin{lem}\label{tssim}
Let $\tau$ be a totally singular quadratic form over $F$.
If $\tau$ is the zero form, then $G_F^0(\tau)=F$.
Otherwise, let $m$ be the largest nonnegative integer
such that there exists an anisotropic $m$-fold quasi-Pfister form
$\pi$ and an anisotropic totally singular quadratic form
$\sigma$ such that $\tau_{\ani}\cong\pi\otimes\sigma$.
Then $G_F^0(\tau)=G_F^0(\pi)$ is a field extension
of degree $2^m$ of $F^2$ inside $F$.

In particular, if $\tau'$ is another totally singular quadratic form
with $\dim\tau=\dim\tau'$, then $G_F^0(\tau,\tau')$
is additively closed.
\end{lem}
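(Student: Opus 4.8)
The plan is to pass to the anisotropic part of $\tau$ and identify $G_F^0(\tau)$ with an explicit subfield of $F$. If $\tau$ is the zero form then $c\tau\cong\tau$ for all $c\in F^*$ (by convention when $\dim\tau=0$), so $G_F^0(\tau)=F$. Assume then $\tau$ is not a zero form, so $\tau_{\ani}$ is anisotropic totally singular of dimension $n\ge 1$; writing $\tau_{\ani}\cong\qf{c_1,\ldots,c_n}$, anisotropy forces the $c_i$ to be $F^2$-linearly independent, so $V:=D_F^0(\tau_{\ani})=\sum_i c_iF^2$ is an $n$-dimensional $F^2$-subspace of $F$. Since scaling a totally singular zero form by a nonzero scalar returns an isometric zero form and anisotropic parts are unique up to isometry, $G_F(\tau)=G_F(\tau_{\ani})$. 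For $c\in F^*$, Proposition \ref{qfwittiso}(i) applied to the totally singular forms $\tau_{\ani}$ and $c\tau_{\ani}$ of equal dimension gives that $c\in G_F(\tau)$ if and only if $\tau_{\ani}\cong c\tau_{\ani}$, if and only if $V=D_F^0(c\tau_{\ani})=cV$; and since $V$ is finite-dimensional over $F^2$ and $c$ is invertible, $cV\subseteq V$ already forces $cV=V$. Hence $G_F^0(\tau)=K$, where $K:=\{c\in F:cV\subseteq V\}$.

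The key step is that $K$ is a field. It is clearly a subring of $F$ (closed under addition because $(c+c')v=cv+c'v$, and under multiplication), it contains $F^2$ (every square lies in $G_F(\tau)$), and its set of nonzero elements is exactly $G_F(\tau)$, which is a group; hence $K$ is a subfield of $F$ with $F^2\subseteq K\subseteq F$. Fixing $0\neq v\in V$, the $F^2$-linear injection $c\mapsto cv$ sends $K$ into $V$, so $[K:F^2]\le\dim_{F^2}V<\infty$. Since $\chr F=2$ and $K\subseteq F$, the extension $K/F^2$ is purely inseparable of exponent at most $1$, so by the standard structure of such extensions $[K:F^2]=2^m$ for some $m\ge 0$ and $K=F^2(b_1,\ldots,b_m)$ for suitable $b_1,\ldots,b_m\in F^*$ (chosen one at a time, each generating a quadratic step of a tower). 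Then the $m$-fold quasi-Pfister form $\pi_0:=\pff{b_1,\ldots,b_m}$ satisfies $G_F^0(\pi_0)=D_F^0(\pi_0)=F^2(b_1,\ldots,b_m)=K$, which has $F^2$-degree $2^m$; hence $\pi_0$ is anisotropic.

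It remains to match this $m$ with the one in the statement. First, if $\tau_{\ani}\cong\pi\otimes\sigma$ with $\pi$ an anisotropic $k$-fold quasi-Pfister form and $\sigma$ anisotropic totally singular, then by roundness every $c\in D_F^*(\pi)=G_F(\pi)$ satisfies $c\tau_{\ani}\cong(c\pi)\otimes\sigma\cong\pi\otimes\sigma\cong\tau_{\ani}$, so $G_F^0(\pi)=D_F^0(\pi)\subseteq G_F^0(\tau)=K$, whence $2^k=[G_F^0(\pi):F^2]\le 2^m$, i.e.\ $k\le m$. Second, $V$ is a $K$-vector space since $KV\subseteq V$ and $K$ is a field; choosing a $K$-basis $v_1,\ldots,v_d$ of $V$ and setting $\sigma:=\qf{v_1,\ldots,v_d}$, one computes $D_F^0(\pi_0\otimes\sigma)=\sum_i v_i\,F^2(b_1,\ldots,b_m)=\sum_i v_iK=V$, and comparing $F^2$-dimensions (both equal to $d\cdot 2^m$) shows the totally singular form $\pi_0\otimes\sigma$, and likewise $\sigma$, to be anisotropic; Proposition \ref{qfwittiso}(i) then yields $\tau_{\ani}\cong\pi_0\otimes\sigma$. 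Thus the largest fold appearing in the statement is exactly $m$, and for any anisotropic $m$-fold quasi-Pfister factor $\pi$ of $\tau_{\ani}$ the inclusion $G_F^0(\pi)\subseteq K$ above is forced to be an equality of $F^2$-subspaces of dimension $2^m$; that is, $G_F^0(\tau)=G_F^0(\pi)$, a degree-$2^m$ field extension of $F^2$ inside $F$. Finally, the ``in particular'' part is immediate: applied to $\tau'$, the part just proved shows $G_F^0(\tau')$ is either $F$ or a subfield of $F$, hence additively closed, and $G_F^0(\tau,\tau')$ is either $\{0\}$ or a coset $c_0G_F^0(\tau')$ of it (for any $c_0\in G_F(\tau,\tau')$), hence also additively closed.

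The main obstacle, and the heart of the argument, is recognising $G_F^0(\tau)$ as the stabiliser ring $\{c\in F:cV\subseteq V\}$ and upgrading this ring to a \emph{field}. Once $K$ is available as a subfield of $F$ finite over $F^2$, extracting the quasi-Pfister factor of $\tau_{\ani}$ of the right fold---realise $K$ via a $2$-basis, then split $V$ along a $K$-basis---is comparatively routine.
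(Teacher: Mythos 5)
Your proof is correct. Note, though, that the paper does not actually prove the main claim itself: it cites \cite[Prop.~6.4]{h-pf} and \cite[Remark~10.4]{ekm} for the statement about $G_F^0(\tau)$, and only supplies the short coset argument for the ``in particular'' part (identical to your last step: $G_F^0(\tau,\tau')$ is empty-plus-zero or a coset $cG_F^0(\tau)$ of an additively closed set). What you have written is a complete, self-contained proof of the cited result, and it is the standard one underlying those references: identify $G_F^0(\tau)$ with the stabilizer $K=\{c\in F: cD_F^0(\tau_{\ani})\subseteq D_F^0(\tau_{\ani})\}$ via Proposition \ref{qfwittiso}(i), observe that $K$ is a subfield of $F$ containing $F^2$ and finite over it (hence of degree $2^m$, with a $2$-basis $b_1,\ldots,b_m$ since $K^2\subseteq F^2$), and then realize the extremal quasi-Pfister factor as $\pff{b_1,\ldots,b_m}$ by viewing $D_F^0(\tau_{\ani})$ as a $K$-vector space and splitting along a $K$-basis. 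The maximality comparison (any $k$-fold anisotropic quasi-Pfister factor has $G_F^0(\pi)\subseteq K$ by roundness, forcing $k\le m$, with equality of the fields when $k=m$ by dimension count over $F^2$) is exactly what is needed to match your $m$ with the one in the statement, and all the small verifications (anisotropy via $F^2$-independence, $cV\subseteq V\Rightarrow cV=V$, compatibility of scaling with the tensor product) check out. So your write-up adds a proof where the paper defers to the literature; the approaches are not in conflict.
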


\begin{proof}  For the statement concerning $G_F^0(\tau)$, see
\cite[Prop.~6.4]{h-pf} or \cite[Remark 10.4]{ekm}.
As for the remaining statement, if $G_F(\tau,\tau')=\emptyset$,
the result is clear.  If $c\in G_F(\tau,\tau')$, then
the result follows because then $G_F^0(\tau,\tau')=cG_F^0(\tau)$
and $G_F^0(\tau)$ is a field and thus additively closed.
\end{proof}

Next, we will show that for any bilinear form $\beta$,
$G_F^0(\beta)$ is in fact also always a field.

\begin{theorem}\label{bilsim}
Let $\beta$ be a bilinear form over $F$.
\item[(i)] $G_F(\beta)=G_F(\beta_{\ani})\cap G_F(q_\beta)$.
\item[(ii)] $G_F^0(q_\beta)$, $G_F^0(\beta_{\ani})$ and therefore
$G_F^0(\beta)$ are subfields of $F$ containing $F^2$.
They are finite extensions of $F^2$ if $\beta$ is not hyperbolic.
\item[(iii)] If $\alpha$ is a bilinear form over $F$ with
$\dim\alpha=\dim\beta$, then $G_F^0(\alpha,\beta)$ is
additively closed.
\end{theorem}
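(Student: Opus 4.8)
The plan is to prove Theorem \ref{bilsim} in the order (i), (ii), (iii), since (ii) rests on (i) plus Lemma \ref{tssim}, and (iii) is an immediate corollary of (ii).

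For part (i), write $\beta\cong\mu\perp\beta_{\ani}$ with $\mu$ metabolic, say $\mu\cong\MM(c_1,\ldots,c_k)$. First I would note that $q_\beta\cong\qf{0,\ldots,0}\perp q_{\beta_{\ani}}$ (the totally singular quadratic form attached to $\beta$), and that for any $\lambda\in F^*$ scaling commutes with passing to the associated quadratic form and with taking anisotropic parts: $(\lambda\beta)_{\ani}\cong\lambda\beta_{\ani}$ (uniqueness of the anisotropic part, since $\lambda\MM(a)$ is again metabolic) and $q_{\lambda\beta}\cong\lambda q_\beta$. For the inclusion ``$\subseteq$'': if $\lambda\in G_F(\beta)$ then $\beta\cong\lambda\beta$, hence $\beta_{\ani}\cong(\lambda\beta)_{\ani}\cong\lambda\beta_{\ani}$ and $q_\beta\cong q_{\lambda\beta}\cong\lambda q_\beta$, so $\lambda\in G_F(\beta_{\ani})\cap G_F(q_\beta)$. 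For ``$\supseteq$'': suppose $\lambda\beta_{\ani}\cong\beta_{\ani}$ and $\lambda q_\beta\cong q_\beta$. Since $\lambda\beta$ and $\beta$ have the same dimension, Milnor's criterion (Proposition \ref{milnor}) says $\lambda\beta\cong\beta$ iff $(\lambda\beta)_{\ani}\cong\beta_{\ani}$ and $q_{\lambda\beta}\cong q_\beta$; the first holds by hypothesis and the stability of anisotropic parts under scaling, the second is exactly $\lambda q_\beta\cong q_\beta$. Hence $\lambda\in G_F(\beta)$. This also gives $G_F^0(\beta)=G_F^0(\beta_{\ani})\cap G_F^0(q_\beta)$ after adjoining $0$.

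For part (ii): $G_F^0(q_\beta)$ is a subfield of $F$ containing $F^2$ by Lemma \ref{tssim} applied to the totally singular quadratic form $\tau=q_\beta$, and it is a finite extension of $F^2$ precisely when $q_\beta$ is not the zero form, i.e. when $\beta$ is not hyperbolic (recall $\beta$ hyperbolic $\iff D_F^0(\beta)=\{0\}\iff q_\beta$ is a zero form). For $G_F^0(\beta_{\ani})$ I would invoke the same Lemma \ref{tssim} applied to $\tau=q_{\beta_{\ani}}$ together with the observation that $G_F(\beta_{\ani})=G_F(q_{\beta_{\ani}})$: indeed $\beta_{\ani}$ is diagonalizable and anisotropic, so two diagonal bilinear forms $\qf{a_1,\ldots,a_n}_b$ and $\qf{\lambda a_1,\ldots,\lambda a_n}_b$ are isometric iff their associated quadratic forms $\qf{a_1,\ldots,a_n}$ and $\qf{\lambda a_1,\ldots,\lambda a_n}$ are isometric — this is exactly Milnor's criterion again in the case where the metabolic part is trivial, or equivalently Proposition \ref{qfwittiso}(i) for anisotropic totally singular forms. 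Hence $G_F^0(\beta_{\ani})=G_F^0(q_{\beta_{\ani}})$, which by Lemma \ref{tssim} is a field extension of $F^2$ inside $F$, finite unless $\beta_{\ani}$ is the zero form, i.e. unless $\beta$ is metabolic; and a metabolic $\beta$ is in particular not hyperbolic only if it is nonzero metabolic, but then $q_\beta$ already gives a finite extension — so in all non-hyperbolic cases at least one of the two factors is a finite extension, forcing the intersection to be finite over $F^2$. Finally $G_F^0(\beta)=G_F^0(\beta_{\ani})\cap G_F^0(q_\beta)$ is an intersection of two subfields of $F$ both containing $F^2$, hence itself such a subfield, and finite over $F^2$ whenever $\beta$ is not hyperbolic.

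For part (iii): if $G_F(\alpha,\beta)=\emptyset$ there is nothing to prove; otherwise pick $c\in G_F(\alpha,\beta)$, so $G_F(\alpha,\beta)=cG_F(\beta)$ and thus $G_F^0(\alpha,\beta)=cG_F^0(\beta)$, which is additively closed because $G_F^0(\beta)$ is a field by (ii). The main obstacle I anticipate is part (i), specifically making airtight the interaction between scaling and the non-unique metabolic part: one must be careful that ``$\lambda\beta\cong\beta$'' genuinely reduces, via Milnor, to conditions on $\beta_{\ani}$ and $q_\beta$ alone and does not secretly require control of the metabolic part — but Proposition \ref{milnor} is precisely the statement that the metabolic part is irrelevant to isometry once dimension, anisotropic part, and associated quadratic form are fixed, so this goes through. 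A minor subtlety is the bookkeeping of the ``finite extension'' clause across the degenerate cases ($\beta$ metabolic but nonzero versus $\beta$ hyperbolic), which I would handle by the remark above that $\beta$ non-hyperbolic forces $q_\beta$ to be a nonzero totally singular form and hence $G_F^0(q_\beta)$ finite over $F^2$.
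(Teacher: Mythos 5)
Parts (i) and (iii) of your proposal are correct and coincide with the paper's argument: (i) is exactly the application of Milnor's criterion to $\beta$ and $c\beta$, and (iii) is the coset argument once (ii) is known. The problem is part (ii), where the entire difficulty of the theorem is concentrated and where your argument breaks down.

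Your reduction of $G_F^0(\beta_{\ani})$ to Lemma \ref{tssim} rests on the claim that $G_F(\beta_{\ani})=G_F(q_{\beta_{\ani}})$, i.e.\ that for an anisotropic diagonal bilinear form, $\lambda\qf{a_1,\ldots,a_n}_b\cong\qf{a_1,\ldots,a_n}_b$ iff the associated totally singular quadratic forms are isometric. This is false, and Milnor's criterion does not say it. Proposition \ref{milnor} states that $\alpha\cong\beta$ iff $\alpha_{\ani}\cong\beta_{\ani}$ \emph{and} $q_\alpha\cong q_\beta$; when $\alpha$ and $\beta$ are both anisotropic the first condition is the conclusion itself, so the criterion degenerates to a tautology and the condition on the quadratic forms is merely a necessary consequence, not an equivalent reformulation. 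Concretely, the paper's own Example after the theorem exhibits $2$-independent $a,b$ and the anisotropic form $\beta\cong\qf{a+1,a,b,ab}_b$ with $q_\beta\cong\pff{a,b}$, for which $G_F(\beta)=F^2(a)^*\subsetneq F^2(a,b)^*=G_F(q_\beta)$ (the determinant already obstructs $b\beta\cong\beta$). So the inclusion $G_F(\beta_{\ani})\subseteq G_F(q_{\beta_{\ani}})$ is strict in general, and your argument proves nothing about $G_F^0(\beta_{\ani})$ beyond this inclusion. The genuinely hard step — that $G_F^0(\beta)$ is additively closed when $\beta$ is anisotropic — is what the paper spends its effort on: given $a,b\in G_F^0(\beta)$ with $c=ba^{-1}\notin F^2$, one translates $\beta\cong c\beta$ into $\pff{c}_b\otimes\beta=0$ in $WF$, invokes the annihilator theorem for bilinear Pfister forms ($\pff{c}_b\otimes\beta=0$ iff $\beta\in\sum_{d\in D_F(\pff{c}_b)}WF\qf{1,d}_b$), and uses $D_F(\qf{1,c})=D_F(\qf{1,c+1})$ to conclude $\beta\cong(c+1)\beta$, whence $a+b=a(c+1)\in G_F(\beta)$. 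Some input of this depth is unavoidable; without it your part (ii), and hence your part (iii), does not go through.
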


\begin{proof}
We may assume $\dim\beta>0$ as otherwise, the above statements
are trivial (all $G_F^0$'s in the statements will be equal to $F$).

(i) If $c\in F^*$, then $(c\beta)_{\ani}=c\beta_{\ani}$ and
$q_{c\beta}=cq_\beta$.
Hence, by Milnor's criterion (Proposition \ref{milnor}), we have
$\beta\cong c\beta$ if and only if
$\beta_{\ani}\cong c\beta_{\ani}$ and $q_\beta\cong cq_\beta$
and the claim follows.

(ii)  Note that $\beta$ is hyperbolic if and only if $q_\beta$
is a zero form,
in which case $\dim\beta_{\ani}=0$ and therefore
$G_F^0(q_\beta)=G_F^0(\beta_{\ani})=G_F^0(\beta)=F$.

If $\beta$ is metabolic but not hyperbolic, then we still have 
$\dim\beta_{\ani}=0$ but now $q_\beta$ is not
a zero form and we have 
$G_F^0(\beta)=G_F^0(q_\beta)$.

So suppose $\dim\beta_{\ani}>0$.  In view of part (i) and
Lemma \ref{tssim}, it suffices to assume that $\beta$ is
anisotropic and to show that $G_F^0(\beta)$ is a field containing $F^2$.
Clearly, $F^2\subseteq G_F^0(\beta)$.  It remains to show that
for any $a,b\in G_F^0(\beta)$ we have that $a-b=a+b\in G_F^0(\beta)$.
If $a=0$ this is clear.  So let us assume $a\neq 0$,
i.e., $a\in G_F(\beta)$ and thus $c:=ba^{-1}\in G_F^0(\beta)$.  
If $c\in F^2$, say,
$c=x^2$ with $x\in F$, then $a+b=b(1+x)^2\in G_F^0(\beta)$.
So let us assume $c\not\in F^2$ so that $\pff{c}_b$ is anisotropic. 

Now we have  $\beta\cong c\beta$,
and since $\beta$ is anisotropic, this is equivalent to
$\beta\perp c\beta$ being metabolic, i.e.,
$\pff{c}_b\otimes\beta =0$ in $WF$ (see, e.g.,
\cite[Prop.~2.4]{ekm}).  By \cite[Th.~4.4]{h-bil} (see also
\cite[Remark 2.2]{ab} or \cite[Cor.~6.23]{ekm})
and since $\pff{c}_b$ is anisotropic, we have
\begin{equation}\label{c0}
\pff{c}_b\otimes\beta =0\in WF\quad\implylr\quad
\beta\in \sum_{d\in D_F(\pff{c}_b)}WF\qf{1,d}_b.
\end{equation}
Now 
$$D_F(\pff{c}_b)=D_F(\qf{1,c})=D_F(\qf{1,c+1})=D_F(\pff{c+1}_b)$$
and therefore
\begin{equation}\label{c1}
\sum_{d\in D_F(\pff{c}_b)}WF\qf{1,d}_b=
\sum_{d\in D_F(\pff{c+1}_b)}WF\qf{1,d}_b.
\end{equation}
Eq.~\ref{c1} implies that in Eq.~\ref{c0}, we may replace $c$ by $c+1$
and conclude that
$\beta\cong (c+1)\beta$, i.e., $c+1\in G_F(\beta)$ and 
hence, $a+b=a(c+1)\in G_F(\beta)$.

(iii)  The argument is the same as in the proof of Lemma \ref{tssim}.
\end{proof}

\begin{exam}
(i) If $\beta$ is a bilinear form over $F$ with associated totally 
singular quadratic form $q_\beta$, then obviously
$G_F(\beta)\subseteq G_F(q_\beta)$.
In general, this inclusion is strict even for anisotropic $\beta$.
As an example, 
let $a,b\in F$ be $2$-independent elements, i.e., $[F^2(a,b):F^2]=4$.
Consider the bilinear form $\beta\cong\qf{a+1,a,b,ab}_b$.
Note that as totally singular quadratic forms, we have 
$$q_\beta\cong\qf{a+1,a,b,ab}\cong\qf{1,a,b,ab}\cong\pff{a,b},$$
which is a $2$-fold quasi-Pfister form, hence
$$G_F(q_\beta)=G_F(\pff{a,b})=D_F(\pff{a,b})=F^2(a,b)^*$$
(see, e.g., \cite[Prop.~8.5]{hl} or \cite[Cor.~10.3]{ekm}).
Note also that $q_\beta$ and hence also
$\beta$ are anisotropic.

However, as bilinear forms, comparing determinants shows that
that $\beta\not\cong\pff{a,b}_b$.  In fact, in $WF$ we have
$\beta = \pff{a+1}_b+\pff{a,b}_b$.  Since $\beta$ is anisotropic,
$x\in F^*$ is a similarity factor iff $\pff{x}_b\otimes\beta=0\in WF$
iff $\pff{x,a+1}_b=\pff{x,a,b}_b\in WF$, and since bilinear
Pfister forms are either anisotropic or metabolic, this holds iff
$\pff{x,a+1}_b=\pff{x,a,b}_b=0\in WF$ iff
$$x\in G_F(\pff{a+1}_b)\cap G_F(\pff{a,b}_b)=
F^2(a)^*\cap F^2(a,b)^*=F^2(a)^*.$$
Hence, $G_F(\beta)=F^2(a)^*\subsetneq F^2(a,b)^*=G_F(q_\beta)$.

(ii) It is possible that $G_F(q_\beta)\subsetneq G_F(\beta_{\ani})$.
But for this to hold, $\beta$ must be isotropic.
As an example, let $a,b$ be as in (i)
and consider $\beta\cong\pff{a}_b\perp\MM(b)$.  Then
$\beta_{\ani}\cong\pff{a}_b$ and hence $G_F(\beta_{\ani})=F^2(a)^*$.
On the other hand, $q_\beta\cong\qf{1,a,b,0}$  and 
thus $(q_\beta)_{\ani}\cong\qf{1,a,b}$, which is of odd
dimension.  Hence, by Lemma \ref{tssim},
$G_F(q_\beta)=F^{*2}\subsetneq F^2(a)^*=G_F(\beta_{\ani})$.  
In particular, $G_F^0(\beta)=F^2$.
\end{exam}

\section{Isometry of forms over certain algebraic extensions}
The main results of this section concern the descent of isometry
for totally
singular quadratic forms and for bilinear forms under separable
algebraic field extensions and for arbitrary quadratic forms under
odd degree extensions. 
We start with some general results on values represented by
totally singular quadratic forms under field extensions.

\begin{prop}\label{tsv}
Let $q$ be a nonzero totally singular quadratic form over $F$,
let $L/F$ be a finite field extension, let $a\in F$ and
$\lambda\in L\setminus F$, and let $K=F(\lambda)$.
\begin{enumerate}
\item[(i)]  If $L/F$ is separable and $a\in D_L(q_L)$, then
$a\in D_F(q)$.
\item[(ii)]  If $a\in F^*$ and $a\lambda\in D_K(q_K)$, then
$K/F$ is separable.
\item[(iii)]  If $a\in F^*$ and $a\lambda\in D_L(q_L)$, then
$K/F$ is separable or $L/K$ is inseparable.
\end{enumerate} 
\end{prop}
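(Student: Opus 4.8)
The plan is to work with the totally singular quadratic form $q\cong\qf{a_1,\dots,a_n}$, so that $D_L^0(q_L)=\sum_{i=1}^n a_i L^2$ is the $L^2$-subspace of $L$ spanned by the $a_i$, and similarly $D_F^0(q)=\sum a_i F^2$. For part (i), suppose $L/F$ is separable and $a\in D_L(q_L)$, so $a=\sum a_i x_i^2$ with $x_i\in L$. I would choose an intermediate step: since $L/F$ is separable, $F$ and $L^2$ are linearly disjoint over $F^2$ inside $L$ (because $L=F L^2$ for separable extensions — indeed $L^2\cdot F$ already equals $L$ when $L/F$ is finite separable, as $[L:L^2]=[F:F^2]$). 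Hence the natural map $F\otimes_{F^2}L^2\to L$ is injective. Writing $1=\sum_k u_k\otimes v_k$-type decompositions is awkward; instead I would pick an $F^2$-basis $\{c_j\}$ of $F$ that is also an $L^2$-basis of $L$ (possible by linear disjointness plus a dimension count), expand each $x_i^2\in L$ and $a\in F$ in this basis with coefficients in $L^2$ and $F^2$ respectively, and match coefficients: the equation $a=\sum_i a_i x_i^2$ forces, upon projecting to the $c_0=1$ component, an identity expressing $a$ as an $F^2$-combination of the $a_i$ — i.e. $a\in D_F^0(q)$, hence $a\in D_F(q)$ since $a\ne 0$.

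For part (ii): assume $a\in F^*$ and $a\lambda\in D_K(q_K)$ with $K=F(\lambda)$, $\lambda\notin F$. Seeking a contradiction, suppose $K/F$ is inseparable; since $[K:F]$ could be composite I would first reduce to the purely inseparable case by passing to the separable closure $F_s$ of $F$ in $K$: by part (i) applied to the separable extension $F_s/F$, if $a\lambda\in D_{F_s}(q_{F_s})$ then... but $\lambda$ may not lie in $F_s$, so instead I work directly over a purely inseparable piece. The cleanest route: there is a purely inseparable subextension, and replacing $K$ by a degree-$2$ purely inseparable extension $F(\mu)$ with $\mu^2\in F$, I claim $a\mu\notin D_{F(\mu)}(q_{F(\mu)})$ for $a\in F^*$. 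Indeed $F(\mu)=F\oplus F\mu$ and $F(\mu)^2=F^2+F^2\mu^2\subseteq F$, so every square in $F(\mu)$ lies in $F$; thus $\sum a_i y_i^2\in F$ for any $y_i\in F(\mu)$, and this can never equal $a\mu\notin F$. Pushing this through the general inseparable case (where $\lambda$ satisfies an equation with a repeated root, so some power $\lambda^{2^e}$ drops into a smaller field) gives the contradiction.

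For part (iii): assume $a\in F^*$, $a\lambda\in D_L(q_L)$, and suppose $K=F(\lambda)$ is \emph{not} separable over $F$ and $L/K$ is \emph{not} inseparable, i.e. $L/K$ is separable; I must derive a contradiction. Since $L/K$ is separable and $a\lambda\in D_L(q_L)$ with $a\lambda\in K$, part (i) applied to $L/K$ gives $a\lambda\in D_K(q_K)$. But then part (ii), applied with base field $F$ and $K=F(\lambda)$, forces $K/F$ to be separable — contradicting our assumption. So (iii) follows formally from (i) and (ii). I expect the main obstacle to be part (ii): making the reduction from an arbitrary inseparable $K/F$ to a manageable purely inseparable degree-$2$ situation rigorous, and in particular tracking the element $a\lambda$ (rather than $a$) through that reduction, since multiplying by the transcendental-looking factor $\lambda$ is what breaks the naive "squares stay in $F$" argument and is precisely where separability must be exploited. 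Parts (i) and (iii) should then be comparatively routine once the linear-disjointness bookkeeping in (i) is set up carefully.
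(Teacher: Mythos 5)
Your overall strategy is genuinely different from the paper's (which runs a Springer-style induction inside $F[X]$: write $a=\sum_i a_ig_i(X)^2+h(X)f(X)$ modulo the minimal polynomial, compare monomials of odd degree to find a separable irreducible factor of $h$ of smaller degree, and descend), and it is viable, but as written there are two real gaps. First, in (i) you only conclude ``$a\in D_F(q)$ since $a\neq 0$'', whereas the statement allows $a=0$ and the paper needs exactly that case: Corollary \ref{springersep} (Artin--Springer for separable extensions) is deduced from (i) with $a=0$. The $a=0$ case does follow from your tool, but via a different formulation: linear disjointness of $F$ and $L^2$ over $F^2$ (MacLane's criterion for the separable extension $L/F$) says that $F^2$-independent elements of $F$ stay $L^2$-independent, so an anisotropic $\qf{a_1,\ldots,a_n}$ stays anisotropic over $L$; state this separately. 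Also, the coefficient-matching you describe (an $F^2$-basis $\{c_j\}$ of all of $F$, then ``projecting to the $c_0=1$ component'') does not directly express $a$ as an $F^2$-combination of the $a_i$. The clean version: take an $F^2$-basis of $V=\sum_i a_iF^2=D_F^0(q)$, extend it to an $F^2$-basis of $F$, note this set is $L^2$-independent by linear disjointness, and compare the two expansions of $a$ (one with $L^2$-coefficients, since $a\in D_L^0(q_L)=VL^2$; one with $F^2$-coefficients, since $a\in F$) to get $a\in V$.

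Second, and more seriously, your reduction in (ii) points at the wrong tower. You propose to replace $K$ by a purely inseparable subextension $F(\mu)/F$ with $\mu^2\in F$; an inseparable simple extension $K/F$ need not contain such a subextension, and in any case the hypothesis concerns $a\lambda$, which does not live in it, so knowing $a\mu\notin D_{F(\mu)}(q_{F(\mu)})$ says nothing about $a\lambda$. The correct completion goes upward rather than downward and is very short: if $K=F(\lambda)$ is inseparable over $F$, the minimal polynomial $f$ of $\lambda$ has $f'=0$, so $f(X)=g(X^2)$ with $g$ irreducible; hence $F':=F(\lambda^2)$ satisfies $[F':F]=\tfrac{1}{2}[K:F]$, so $\lambda\notin F'$, while $K^2=F^2(\lambda^2)\subseteq F'$. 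Therefore $D_K^0(q_K)=\sum_i a_iK^2\subseteq F'$, but $a\lambda\notin F'$ since $a\in F^*$ --- contradiction. With these two repairs your plan closes up: your (iii) (apply (i) to the separable extension $L/K$ and the element $a\lambda\in K$, then apply (ii)) is exactly the paper's own deduction. The resulting proof of (i) is the standard linear-disjointness argument that the paper explicitly contrasts with its polynomial proof, and your (ii), once fixed as above, is arguably more transparent than the paper's monomial-parity computation.
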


\begin{proof}  
The minimal polynomial of $\lambda$ over $F$ will be denoted
by $f(X)\in F[X]$, so $m=\deg(f)=[K:F]\geq 2$.
We write $q\cong\qf{a_1,\ldots,a_n}$, $a_i\in F$, not all $a_i=0$.

(i) Since $L/F$ is separable, it is a simple extension,
so without loss of generality, we may assume $L=K=F(\lambda)$.
Now $f(X)$ is a separable polynomial, so we have $f'\neq 0$
(where $f'$ denotes the formal derivative) and since
$\chr(F)=2$, we must have $f\in F[X]\setminus F[X^2]$.
Now $a\in D_L(q_L)$ is equivalent to the existence of $g_i\in F[X]$,
$1\leq i\leq n$, with $\deg(g_i)< m$, not all $g_i=0$, and
some $h\in F[X]$, such that
\begin{equation}\label{tsve1}
a=\sum_{i=1}^na_ig_i(X)^2 +h(X)f(X).
\end{equation}
Such an equation holds with the $g_i$ and $h$ not being divisible by
a common irreducible polynomial.  If $a\neq 0$, this is clear.
If $a=0$ and if $p(X)$ is a common irreducible divisor and $g_j\neq 0$, 
then $\deg(p)\leq\deg(g_j)<\deg(f)$, so $p$ does not divide
$f$.  On the other hand, $p^2|g_i^2$ (for each $i$)
and thus $p^2|h$ and we may
replace $g_i$ by $\frac{g_i}{p}$ and $h$ by $\frac{h}{p^2}$,
and in this way we can get rid of any common divisors. 

Comparing degrees on both sides in Eq.~\ref{tsve1} and using that
$\deg(g_i^2)\leq 2\deg(f)-2$, we see that
$\deg(h)<\deg(f)$.  Also, note that $g_i^2\in F[X^2]$, and comparing
monomials of odd degree on both sides and using the fact that
$f\in F[X]\setminus F[X^2]$, we must have
$h\in F[X]\setminus F[X^2]$ or $h=0$.  Consequently,
$h(X)$ must possess
an irreducible divisor $c(X)\in F[X]\setminus F[X^2]$
(if $h=0$, we may choose $c(X)=X$) and we have 
$\deg(c)<\deg(f)$.
In particular, $c(X)$ is a separable polynomial.
Let $\mu$ be a root of $c(X)$ in some algebraic closure of $F$.
Then $M=F(\mu)$ is separable over $F$ with
$$[M:F]=\deg(c)<\deg(f)=[L:F].$$
By what was mentioned above, not all the $g_i$ are divisible
by $c$, so we have $g_i(\mu)\neq 0$ for at least one
$i\in\{ 1,\ldots ,n\}$, and substituting in Eq.~\ref{tsve1}
yields $a=\sum_{i=1}^na_ig_i(\mu)^2$,
which shows that $a\in D_M(q_M)$ for a separable extension $M/F$
with $[M:F]<[L:F]$.  An induction on the degree of the
extension concludes the proof.

(ii)  Assume that $K/F$ is inseparable.  Then we must have
$f(X)\in F[X^2]$ and $\deg(f)=[K:F]\geq 2$
(recall that $\lambda\not\in F$).  Proceeding
as in (i), we now get an equation
\begin{equation}\label{tsve2}
aX=\sum_{i=1}^na_ig_i(X)^2 +h(X)f(X).
\end{equation}
Since $g_i(X)^2, f(X)\in F[X^2]$ and comparing monomials of
odd degree on both sides, we see that $h(X)\in F[X]\setminus F[X^2]$.
But then, $h(X)f(X)$ and therefore the right hand side of
Eq.~\ref{tsve2} contains a monomial of odd degree $\geq 3$
with nonzero coefficient, and comparing with the left hand side
yields a contradiction.

(iii) If $K/F$ is inseparable and $L/K$ is separable, then
(ii) implies that $a\lambda\not\in D_K(q_K)$, and (i) then
implies that $a\lambda\not\in D_L(q_L)$.
\end{proof}

This now yields a  generalization of
the Artin-Springer theorem on odd degree extensions.  Note that in
characteristic $2$, all odd degree field extensions are separable.

\begin{cor}\label{springersep}
Let $\phi$ be an anisotropic totally singular quadratic form
or a bilinear form over $F$, and let $L/F$ be a separable
algebraic extension. Then $\phi$ stays anisotropic over $L$.
\end{cor}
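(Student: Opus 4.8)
The plan is to reduce everything to Proposition \ref{tsv}(i) applied with $a=0$. First I would pass from an arbitrary separable algebraic extension to a finite one: if $\phi$ became isotropic over $L$, a nonzero isotropic vector of $V\otimes_F L$ has only finitely many nonzero coordinates, so it already lies in $V\otimes_F L_0$ for a finitely generated, hence finite, subextension $L_0/F$, which is again separable, and $\phi_{L_0}$ is isotropic. Hence it suffices to treat the case $[L:F]<\infty$ with $L/F$ separable.

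Next, the totally singular quadratic case. We may assume $\phi$ is nonzero, since the $0$-dimensional form is trivial and a positive-dimensional zero form is isotropic and therefore not among the forms we consider. Then $\phi$ being anisotropic means precisely $0\notin D_F(\phi)$. If $\phi_L$ were isotropic, then $0\in D_L(\phi_L)$, and Proposition \ref{tsv}(i) taken with $a=0$ (its proof does cover this case) would force $0\in D_F(\phi)$, a contradiction; so $\phi_L$ stays anisotropic.

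Finally, the bilinear case reduces to the previous one. For any bilinear form $\beta$ on $V$ we have $D_F(\beta)=\{\beta(x,x)\mid x\in V\setminus\{0\}\}=\{q_\beta(x)\mid x\in V\setminus\{0\}\}=D_F(q_\beta)$, where $q_\beta$ is the associated totally singular quadratic form, and the same identity holds after scalar extension since $(q_\beta)_L=q_{\beta_L}$. Hence $\beta$ is anisotropic if and only if $q_\beta$ is, over $F$ and over $L$ alike; moreover if $\beta$ is anisotropic of positive dimension then $q_\beta$ is a nonzero anisotropic totally singular quadratic form, so the previous paragraph shows that $q_{\beta_L}$, and therefore $\beta_L$, is anisotropic.

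I expect no serious obstacle here: all the genuine work sits in Proposition \ref{tsv}(i), which is already proved. The only two things worth pausing on are the legitimacy of specializing that proposition to $a=0$, and the observation that in characteristic $2$ anisotropy of a symmetric bilinear form is detected by its associated totally singular quadratic form, so that the bilinear statement is really just a special case of the totally singular one.
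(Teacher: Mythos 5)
Your proof is correct and follows the same route as the paper: apply Proposition \ref{tsv}(i) with $a=0$ for the totally singular case and reduce the bilinear case to it via the identity $D_F(\beta)=D_F(q_\beta)$. Your extra care in reducing an arbitrary separable algebraic extension to a finite separable subextension (which Proposition \ref{tsv} formally requires) is a detail the paper leaves implicit, but it is routine and handled correctly.
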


\begin{proof}
For totally singular forms, this follows readily
from Proposition \ref{tsv}(i) for $a=0$.
For a bilinear form $\phi$, this follows from the fact that the
isotropy of $\phi$ is nothing else but the isotropy of
the totally singular quadratic form $q_\phi$.
\end{proof}

\begin{remark}
For totally singular forms, the above corollary,
using different arguments, 
was originally shown in \cite[Prop~ 5.3]{h-pf}
and also in \cite[Lemma 2.8]{lag}).
Our proof (essentially the proof of Proposition \ref{tsv}(i))
mimics to some extent Springer's original
proof for odd degree extensions.
\end{remark}

\begin{prop}\label{sepiso}
Let $\phi$ and $\psi$ be totally singular quadratic forms or 
bilinear forms over $F$ of the same dimension.  Let $L/F$ be a 
separable algebraic field extension.  Then $\phi$ and $\psi$
are isometric over $F$ if and only if  $\phi$ and $\psi$ are
isometric over $L$.
\end{prop}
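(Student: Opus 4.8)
The plan is to treat the totally singular quadratic case and the bilinear case separately, doing the former first since it is needed for the latter. The ``only if'' direction is immediate, as scalar extension turns an isometry over $F$ into one over $L$. For the converse, an isometry $\phi_L\cong\psi_L$ is given by an invertible matrix with entries in $L$; these entries generate a finite subextension of $L/F$, which is again separable, so I may assume throughout that $L/F$ is finite (and separable). In the bilinear case I may moreover assume, as explained in \S~2, that $\phi$ and $\psi$ are nonsingular: isometries map radicals onto radicals, and $\dim_F\rad(\phi)=\dim_L\rad(\phi_L)$ since passing to $L$ does not change the rank of a Gram matrix, and likewise for $\psi$.

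For totally singular quadratic forms I would argue via Proposition \ref{qfwittiso}(i), which reduces isometry of forms of equal dimension to equality of the sets $D_F^0$. If $\psi$ is a zero form then so are $\psi_L$, $\phi_L$ and $\phi$, and two zero forms of equal dimension are isometric; so assume both $\phi$ and $\psi$ are nonzero. Given $a\in D_F^0(\phi)$, the case $a=0$ is trivial, and if $a\in D_F^*(\phi)\subseteq D_L(\phi_L)=D_L(\psi_L)$ then Proposition \ref{tsv}(i), applied to the nonzero totally singular form $\psi$ and the finite separable extension $L/F$, gives $a\in D_F(\psi)$. Hence $D_F^0(\phi)\subseteq D_F^0(\psi)$, and by symmetry $D_F^0(\phi)=D_F^0(\psi)$, so $\phi\cong\psi$ over $F$ by Proposition \ref{qfwittiso}(i).

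For bilinear forms I would invoke Milnor's criterion (Proposition \ref{milnor}): $\phi_L\cong\psi_L$ is equivalent to $(\phi_L)_{\ani}\cong(\psi_L)_{\ani}$ together with $q_{\phi_L}\cong q_{\psi_L}$. Since $q_{\phi_L}=(q_\phi)_L$, $q_{\psi_L}=(q_\psi)_L$ and $\dim q_\phi=\dim\phi=\dim\psi=\dim q_\psi$, the totally singular case already settled above yields $q_\phi\cong q_\psi$ over $F$. For the anisotropic parts, write $\phi\cong\phi_m\perp\phi_{\ani}$ with $\phi_m$ metabolic; then $(\phi_m)_L$ is metabolic while $(\phi_{\ani})_L$ is anisotropic by Corollary \ref{springersep}, so by uniqueness of the anisotropic part $(\phi_L)_{\ani}\cong(\phi_{\ani})_L$, and likewise for $\psi$, whence $(\phi_{\ani})_L\cong(\psi_{\ani})_L$. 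Next, the restriction $r^*_{L/F}\colon WF\to WL$ is injective: if $\beta$ is anisotropic with $\beta_L$ metabolic, then $\beta_L$ is also anisotropic by Corollary \ref{springersep}, forcing $\dim\beta=0$. As $(\phi_{\ani})_L\cong(\psi_{\ani})_L$ gives $r^*_{L/F}([\phi_{\ani}])=r^*_{L/F}([\psi_{\ani}])$, injectivity yields $[\phi_{\ani}]=[\psi_{\ani}]$ in $WF$, and since both are anisotropic, $\phi_{\ani}\cong\psi_{\ani}$. Finally $\phi_{\ani}\cong\psi_{\ani}$, $q_\phi\cong q_\psi$ and $\dim\phi=\dim\psi$ give $\phi\cong\psi$ over $F$ by Milnor's criterion again.

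The argument has no serious obstacle once Proposition \ref{tsv}(i) and Corollary \ref{springersep} are in hand; the one point demanding care — and the one place where separability rather than merely odd degree is indispensable — is the identity $(\phi_L)_{\ani}\cong(\phi_{\ani})_L$ and the consequent injectivity of $r^*_{L/F}$.
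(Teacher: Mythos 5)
Your proof is correct and follows essentially the same route as the paper: the totally singular case via Proposition \ref{qfwittiso}(i) and Proposition \ref{tsv}(i), and the bilinear case via Milnor's criterion together with Corollary \ref{springersep}. Your handling of the anisotropic parts through injectivity of $r^*_{L/F}$ is just a repackaging of the paper's argument that $\phi\cong\psi$ iff $(\phi\perp\psi)_{\ani}=0$, and your explicit reduction to a finite subextension is a welcome (if standard) detail the paper leaves implicit.
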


\begin{proof}
Of course, isometry over $F$ implies isometry over $L$.
As for the converse,
let us first consider the case where $\phi$ and $\psi$ are
totally singular quadratic forms, say, 
$\phi\cong\qf{a_1,\ldots,a_n}$,
$\psi\cong\qf{b_1,\ldots,b_n}$.  Now the isometry class
of $\phi$ over any field extension $K$ of $F$ is uniquely
determined by the $K^2$-vector space $D^0_K(\phi)$, which
is generated over $K^2$ by $a_1,\ldots,a_n$ (analogous
statements hold for $\psi$).  In other words, $\phi_K\cong\psi_K$
iff $D^0_K(\phi_K)=D^0_K(\psi_K)$ iff $a_i\in D^0_K(\psi_K)$ and
$b_i\in D^0_K(\phi_K)$ for all $1\leq i\leq n$. 
This and Proposition \ref{tsv}(i) together
readily yield that $\phi_L\cong\psi_L$ implies $\phi\cong\psi$.

Now suppose that $\phi$ and $\psi$ are anisotropic bilinear
forms of the same dimension over $F$.  Then $\phi\cong\psi$
iff $(\phi\perp\psi)_{\ani}=0$ (see, e.g., \cite[Prop.~2.4]{ekm}).
But because of Corollary \ref{springersep},
we have that $\phi_L$ and $\psi_L$  are anisotropic and
$((\phi\perp\psi)_{\ani})_L\cong (\phi_L\perp\psi_L)_{\ani}$, and therefore
$\phi\cong\psi$ iff $\phi_L\cong\psi_L$.

Finally, let $\phi$ and $\psi$ be arbitrary bilinear forms over
$F$ of the same dimension.  Then the result follows from the above
cases of totally singular quadratic forms and of anisotropic
bilinear forms together with Milnor's criterion (Lemma \ref{milnor}).
\end{proof}

For arbitrary quadratic forms and odd degree extensions, we 
have the following.
\begin{prop}\label{qfoddiso}
Let $\phi$ and $\psi$ be quadratic forms over $F$ and let
$L/F$ be an odd degree field extension.  Then $\phi$ and $\psi$
are isometric 
over $F$ if and only if $\phi$ and $\psi$ are isometric
over $L$
\end{prop}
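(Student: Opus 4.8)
The plan: the forward implication is immediate, so assume $\phi_L\cong\psi_L$; in particular $\dim\phi=\dim\psi$. I would first dispose of the radicals. Forming the radical commutes with scalar extension, so $\rad(\phi_L)=\rad(\phi)\otimes_FL$, and any isometry $\phi_L\cong\psi_L$ carries radicals to radicals; hence $(\phi|_{\rad\phi})_L\cong(\psi|_{\rad\psi})_L$. The restrictions $\phi|_{\rad\phi}$ and $\psi|_{\rad\psi}$ are totally singular, and $L/F$, being of odd degree in characteristic $2$, is separable, so Proposition \ref{sepiso} gives $\phi|_{\rad\phi}\cong\psi|_{\rad\psi}$ over $F$; denote this common totally singular form by $\tau$. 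Using the decomposition from \S 2, write $\phi\cong\phi_r\perp\tau$ and $\psi\cong\psi_r\perp\tau$ with $\phi_r,\psi_r$ nonsingular; comparing dimensions gives $\dim\phi_r=\dim\psi_r$. Now by Proposition \ref{qfwittiso}(ii) (the third characterization, with totally singular parts $\cong\tau$), proving $\phi\cong\psi$ over $F$ amounts to proving $\psi_r\perp\phi_r\perp\tau\sim\tau$ over $F$, and the same Proposition applied over $L$ to $\phi_L\cong\psi_L$ shows this relation holds over $L$.

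So, putting $\rho:=\psi_r\perp\phi_r$ (nonsingular over $F$), everything reduces to the descent statement: if $\rho$ is nonsingular, $\tau$ is totally singular, and $(\rho\perp\tau)_L\sim\tau_L$, then $\rho\perp\tau\sim\tau$ over $F$. To prove this I would decompose $\rho\perp\tau$ over $F$ as in \S 2. Since $\rho$ is nonsingular, the radical of $\rho\perp\tau$ is the underlying space of $\tau$, so the restriction of $\rho\perp\tau$ to its radical is $\tau$; hence the totally singular anisotropic summand in the decomposition is $\tau_{\ani}$, and $\rho\perp\tau\cong(\text{hyperbolic})\perp\rho'\perp\tau_{\ani}\perp(\text{zero form})$ with $\rho'$ nonsingular and $(\rho\perp\tau)_{\ani}\cong\rho'\perp\tau_{\ani}$ anisotropic over $F$. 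The crucial point is that $\rho'\perp\tau_{\ani}$ stays anisotropic over $L$ --- this is the Artin--Springer theorem for an arbitrary anisotropic quadratic form under an odd degree extension. Granting it, scalar extension of the decomposition gives $((\rho\perp\tau)_L)_{\ani}\cong\rho'_L\perp(\tau_{\ani})_L$. On the other hand $\tau_{\ani}$ stays anisotropic over $L$ by Corollary \ref{springersep}, so $(\tau_L)_{\ani}\cong(\tau_{\ani})_L$, and the hypothesis $(\rho\perp\tau)_L\sim\tau_L$ becomes $\rho'_L\perp(\tau_{\ani})_L\cong(\tau_{\ani})_L$. A dimension count forces $\rho'=0$, hence $(\rho\perp\tau)_{\ani}\cong\tau_{\ani}$, i.e.\ $\rho\perp\tau\sim\tau$ over $F$, which is what was needed.

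The main obstacle is exactly the use of Artin--Springer for a not necessarily nonsingular anisotropic quadratic form over an odd degree extension: the totally singular case is Corollary \ref{springersep} and the nonsingular case is classical, but the general case does not formally follow from these two and must be supplied separately, either by citation (\cite{ekm}) or by rerunning Springer's polynomial argument, as in the proof of Proposition \ref{tsv}(i). What makes that argument go through in characteristic $2$ is that anisotropy of the form over $F$ forces the top-degree ``coefficient form'' of a hypothetical nontrivial isotropic vector (written with polynomial coordinates) to be non-zero; this pins down the parity of the degrees involved and allows descent to a proper odd degree subextension, feeding an induction on $[L:F]$.
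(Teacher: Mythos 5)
Your proof is correct and follows essentially the same route as the paper: both dispose of the totally singular part via Proposition \ref{sepiso} (using that odd degree extensions are separable in characteristic $2$), invoke the Artin--Springer theorem for possibly singular anisotropic quadratic forms over odd degree extensions, and finish with a Witt-equivalence and dimension-count argument that is just Proposition \ref{qfwittiso}(ii) unwound. You are right to single out the general (singular) Artin--Springer theorem as the one external input that does not follow formally from the nonsingular and totally singular cases; the paper relies on it in exactly the same way, citing it as known rather than reproving it.
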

\begin{proof} 
Write $\phi\cong\phi_r\perp\phi_s$ with $\phi_r$ nonsingular
and $\phi_s$ totally singular, and analogously $\psi\cong\psi_r\perp\psi_s$.
Recall that $\phi_s$ (resp. $\psi_s$) is uniquely determined
by $\phi$ (resp. $\psi$) as it corresponds to the radical of the form. 
The dimensions of the nonsingular resp. totally singular part do not
change under field extensions and remain unchanged under isometries.
Hence, we may in fact assume
$m=\dim\phi_r=\dim\psi_r$ and $\dim\phi_s=\dim\psi_s$.
By the Artin-Springer theorem and since the anisotropic parts of
quadratic forms are uniquely determined up to isometry, we may
furthermore assume that $\phi$ and $\psi$ are anisotropic.  Isometries
of quadratic forms restrict to isometries of their respective
radicals, hence $\phi_L\cong\psi_L$ implies 
$$(\phi_s)_L\cong (\phi_L)_s\cong (\psi_L)_s\cong (\psi_s)_L$$
and therefore, by Proposition \ref{sepiso}, 
$\phi_s\cong\psi_s$.  Also,
$$(\phi_r)_L\perp(\phi_s)_L\cong \phi_L\cong\psi_L\cong
(\psi_r)_L\perp (\phi_s)_L$$
and hence
$$(\psi_r\perp\phi_r\perp\phi_s)_L\cong (\psi_r\perp\psi_r\perp\phi_s)_L
\cong (m\HH\perp\phi_s)_L.$$
By the Artin-Springer theorem and since $\phi_s$ is anisotropic, this implies
$$\psi_r\perp\phi_r\perp\phi_s\cong m\HH\perp\phi_s.$$
Adding $\psi_r$ on both sides, and comparing anisotropic parts
(i.e. cancelling the hyperbolic planes on both sides)
yields 
$$\phi\cong \phi_r\perp\phi_s\cong\psi_r\perp\phi_s
\cong\psi_r\perp\psi_s\cong\psi.$$
\end{proof}

\section{Similarity of bilinear forms and totally
singular quadratic forms over separable extensions}

Before we turn our attention to similarity of bilinear forms resp.
totally singular quadratic forms over separable extensions,
let us introduce some additional terminology.  Let $K$ be a field,
$\sigma\in \Aut(K)$.  For any matrix $A=(a_{ij})\in M_n(K)$ we define
$\sigma(A)=\bigl(\sigma(a_{ij})\bigr)\in M_n(K)$.  Thus, $\sigma$ extends
to a ring automorphism of $M_n(K)$ and it respects congruence
of matrices, i.e., if $A,B\in M_n(K)$ are congruent iff $\sigma(A)$ and
$\sigma(B)$ are congruent.  Indeed, if there exists an $S\in\GL_n(K)$
with $S^tAS=B$, then by putting $T=\sigma(S)\in\GL_n(K)$, we have
$T^t\sigma(A)T=\sigma(B)$.  Obviously, the converse also holds
by putting $S=\sigma^{-1}(T)$.

Now let $\beta$ be any bilinear form on a $K$-vector space $V$,
and suppose that $\beta$ has Gram matrix $B\in M_n(K)$ for
some basis of $V$.  Then we define $\sigma(\beta)$ to be
the bilinear form with Gram matrix $\sigma(B)$.  By the above,
the isometry class of $\sigma(\beta)$ does not depend on the
chosen Gram matrix.  In particular, if $\alpha$ and $\beta$
are bilinear forms over $K$, then $\alpha\cong\beta$
if and only if $\sigma(\alpha)\cong\sigma(\beta)$.

If $K/F$ is a field extension and $\sigma\in\Gal(K/F)$,
and if $\beta$ is a bilinear form over $F$, then
obviously $\beta_K\cong\sigma(\beta_K)$ because
$\beta_K$ has a Gram matrix defined over $F$.

We get an analogous construction for totally singular
quadratic forms over $K$ by mapping such a form
$\phi\cong\qf{a_1,\ldots,a_n}$ to 
$\sigma(\phi)\cong\qf{\sigma(a_1),\ldots,\sigma(a_n)}$.
Again, if $\psi$ is another
totally singular quadratic form over $K$, then
$\phi\cong\psi$ iff  $\sigma(\phi)\cong\sigma(\psi)$.
This follows by comparing dimensions and from the fact
that $\sigma(K^2)=K^2$
and thus $D_K(\sigma(\phi))=\sigma(D_K(\phi))$, and
this equals $D_K(\sigma(\psi))=\sigma(D_K(\psi))$
if and only if $D_K(\phi)=D_K(\psi)$.

\begin{lem}\label{lambda}
Let $L/F$ be a separable
algebraic field extension and $\lambda\in L^*$.
Let $n=[F(\lambda):F]$, and let
$$f(X)=X^n+a_1X^{n-1}+\ldots a_{n-1}X+a_n=
\prod_{i=1}^n(X-\lambda_i)\in F[X]$$
be the minimal polynomial of $\lambda$ over $F$, where
$\lambda=\lambda_1,\lambda_2,\ldots,\lambda_n$ are the distinct
roots of $f$ in some algebraic closure containing $L$.
Let $\phi$ and $\psi$ be totally singular quadratic forms
or bilinear forms over $F$.  If $\phi_L\cong \lambda\psi_L$,
then for each $m\in\{ 1,\ldots , n\}$ the following holds: 
$$a_m\in\left\{
\begin{array}{ll}
G_F^0(\psi) & \mbox{if $m\equiv 0\bmod 2$,}\\
G_F^0(\phi,\psi) & \mbox{if $m\equiv 1\bmod 2$.}\end{array}
\right.$$
\end{lem}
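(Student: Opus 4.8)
The plan is to exploit the fact that $\lambda$ is a root of $f$, so that over $L$ we have the relation $\lambda^n = a_1\lambda^{n-1} + \cdots + a_{n-1}\lambda + a_n$ (recall $\chr F = 2$, so signs are irrelevant). I would like to convert the similarity $\phi_L \cong \lambda\psi_L$ into statements about which elements of $F$ lie in $G_F^0(\psi)$ or $G_F^0(\phi,\psi)$. The key observation is that $G_F^0(\phi,\psi)$ and $G_F^0(\psi)$ are additively closed over $F$ — this is exactly Lemma \ref{tssim} for totally singular quadratic forms and Theorem \ref{bilsim}(iii) for bilinear forms — and they are in fact $F^2$-subspaces of $F$ (since $F^{*2}\subseteq G_F(\psi)$ and they are closed under addition). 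So I only need to produce the $a_m$ as $F^2$-linear combinations of elements already known to lie in these sets.

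First I would treat the totally singular quadratic case. Write $\phi \cong \qf{c_1,\ldots,c_k}$ and $\psi\cong\qf{d_1,\ldots,d_k}$ over $F$. The similarity $\phi_L\cong\lambda\psi_L$ means $D_L^0(\phi_L) = \lambda D_L^0(\psi_L)$ as $L^2$-subspaces of $L$. Working inside $K=F(\lambda)$ (note $\phi_L\cong\lambda\psi_L$ forces the analogous statement over $K$ once one observes the value sets are generated by elements of $F$, hence defined over $K$), I would express everything in the $F$-basis $1,\lambda,\ldots,\lambda^{n-1}$ of $K$. The relation $\phi_K \cong \lambda\psi_K$ gives, for each $j$, an expansion $c_j = \lambda\sum_i d_i g_{ij}(\lambda)^2$ with $g_{ij}\in F[X]$ of degree $<n$, plus similar expansions going the other way. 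Reducing $\lambda\cdot g_{ij}(\lambda)^2$ modulo $f(\lambda)$ and collecting the coefficient of each power $\lambda^m$, the coefficient of $\lambda^0$ must recover $c_j$ (giving membership statements for even-indexed $a_m$ paired with $\psi$, via the tail of the reduction) while the coefficients of $\lambda^m$ for $m\ge 1$ must vanish. The bookkeeping here — tracking which $a_m$ appears with which parity when $\lambda^n$ is repeatedly substituted — is the main obstacle, and I expect one proves the claim by a downward induction on $m$: having killed the coefficients of $\lambda^{m+1},\ldots,\lambda^{n-1}$, the vanishing of the coefficient of $\lambda^m$ together with the additive/$F^2$-linear closure of $G_F^0(\psi)$ (resp.\ $G_F^0(\phi,\psi)$) forces $a_m$ into the appropriate set, the parity of $m$ dictating whether the ``extra'' factor of $\lambda$ produced by the substitution lands us in the $(\phi,\psi)$-coset or back in the group $G_F^0(\psi)$.

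For bilinear forms I would reduce to the totally singular case and the anisotropic case via Milnor's criterion (Proposition \ref{milnor}) and Theorem \ref{bilsim}(i): the similarity $\beta_L\cong\lambda\beta'_L$ is equivalent to $\beta_{\ani,L}\cong\lambda\beta'_{\ani,L}$ together with $q_{\beta,L}\cong\lambda q_{\beta',L}$. For the totally singular forms $q_\beta, q_{\beta'}$ the previous paragraph applies directly. For the anisotropic bilinear parts, similarity $\beta_{\ani}\cong\lambda\beta'_{\ani}$ over $L$ (using Corollary \ref{springersep} to know anisotropy persists) again yields Gram-matrix congruence relations over $K$; I would run the same $\lambda$-adic expansion of the congruence $S^t B S = \lambda B'$ with $S\in\GL_n(F[X]/(f))$, substitute $\lambda^n = \sum a_i\lambda^{n-i}$, and extract coefficients. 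Since $G_F^0(\beta_{\ani})$ is a field containing $F^2$ by Theorem \ref{bilsim}(ii), it is in particular additively and $F^2$-linearly closed, so the same downward induction on $m$ goes through. Finally $G_F^0(\beta) = G_F^0(\beta_{\ani})\cap G_F^0(q_\beta)$ and $G_F^0(\beta,\beta') = G_F^0(\beta_{\ani},\beta'_{\ani})\cap G_F^0(q_\beta,q_{\beta'})$, so membership of each $a_m$ in both factors yields membership in the intersection, completing the proof. The delicate point throughout is the precise parity accounting in the substitution $\lambda^n\mapsto\sum a_i\lambda^{n-i}$, which is where I expect to spend the most care.
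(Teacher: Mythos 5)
There is a genuine gap: your coefficient-extraction scheme never brings the other roots $\lambda_2,\ldots,\lambda_n$ of $f$ into play, and without them the individual memberships $a_m\in G_F^0(\psi)$ resp.\ $G_F^0(\phi,\psi)$ do not come out. The $a_m$ are the elementary symmetric polynomials in \emph{all} the roots, and the actual mechanism of the proof is: pass to the splitting field $K=F(\lambda_1,\ldots,\lambda_n)$ (the isometry transfers there and back by Proposition \ref{sepiso}, since $L/F(\lambda)$ and $K/F(\lambda)$ are separable); apply each $\sigma_i\in\Gal(K/F)$ with $\sigma_i(\lambda)=\lambda_i$ to the isometry $\phi_K\cong\lambda\psi_K$ (using that $\phi,\psi$ are defined over $F$, so $\sigma_i(\phi_K)\cong\phi_K$) to conclude that \emph{every} $\lambda_i$ lies in $G_K(\phi_K,\psi_K)=\lambda G_K(\psi_K)$; then an $m$-fold product of roots lies in $G_K(\psi_K)$ or $G_K(\phi_K,\psi_K)$ according to the parity of $m$ because the latter is a coset of the former, and $a_m$, being a sum of such products, lands in the right set by the additive closure you correctly identified; finally descend to $F$ by Proposition \ref{sepiso} again. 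Your proposal replaces this by reducing $\lambda\,g_{ij}(\lambda)^2$ modulo $f$ inside $F(\lambda)$ and hoping a downward induction on the coefficient of $\lambda^m$ isolates $a_m$. Already for $n=2$ this fails to separate the statements: writing $g=u+v\lambda$ one gets $\lambda g^2=v^2a_1a_2+(u^2+v^2a_2+v^2a_1^2)\lambda$, so the constant and linear coefficients only give you information about $a_1a_2$ and $a_2+a_1^2$ jointly, not about $a_1\in G_F^0(\phi,\psi)$ and $a_2\in G_F^0(\psi)$ separately. The "parity accounting" you flag as the delicate point is exactly where the argument breaks, and no amount of bookkeeping inside $F(\lambda)$ alone supplies the missing input that the conjugates of $\lambda$ are themselves relative similarity factors.

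A secondary remark: for the bilinear case the paper does not go through Milnor's criterion here; it acts with $\sigma\in\Gal(K/F)$ directly on Gram matrices (the $\sigma(\beta)$ construction set up at the start of \S 5) and invokes Theorem \ref{bilsim}(iii) for additive closure. Your reduction to $(\beta_{\ani},q_\beta)$ is a workable alternative framing, but the anisotropic bilinear part still needs the same Galois-conjugation idea, and the proposed expansion of the congruence $S^tBS=\lambda B'$ in powers of $\lambda$ is even less tractable than the totally singular case because the entries of $S$ are not squares.
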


\begin{proof}
We have
$$a_m=\sum_{1\leq i_1<i_2<\ldots <i_m\leq n}
\lambda_{i_1}\lambda_{i_2}\cdots\lambda_{i_m}$$
(we may disregard signs as we are in characteristic $2$).
Now consider the splitting field $K=F(\lambda_1,\ldots,\lambda_n)$
of $f$.  Note that the extensions $L/F(\lambda)$ and $K/F(\lambda)$
are both separable.  Hence, by Proposition \ref{sepiso},
we have 
$$\phi_L\cong\lambda\psi_L\quad \implylr\quad 
\phi_{F(\lambda)}\cong\lambda\psi_{F(\lambda)}\quad \implylr\quad 
\phi_K\cong\lambda\psi_K.$$
By Galois theory, there exists to each $i\in\{ 1,\ldots ,n\}$ some
$\sigma_i\in \Gal(K/F)$ with $\sigma_i(\lambda)=\lambda_i$.
By the construction described above and using that $\phi$, $\psi$
are defined  over $F$, we conclude that
$$\phi_K\cong\sigma_i(\phi_K)\cong\sigma_i(\lambda\psi_K)\cong
\sigma_i(\lambda)\psi_K\cong \lambda_i\psi_K.$$
This implies that $\lambda_i\in G_K(\phi_K,\psi_K)=\lambda G_K(\psi_K)$.
Hence, if $c_1,\ldots, c_m\in\{ \lambda_1,\ldots,\lambda_n\}$, then
$c=\prod_{i=1}^mc_i\in G_K(\psi_K)$ if $m$ is even and 
$c\in G_K(\phi_K,\psi_K)$ if $m$ is odd.

Since $G_K^0(\psi_K)$ and $G_K^0(\phi_K,\psi_K)$ are additively closed,
we readily get that $a_m\in G_K^0(\psi_K)$ if $m$ is even, and 
$a_m\in G_K^0(\phi_K,\psi_K)$ if $m$ is odd.  If $a_m=0$, the lemma is
trivially true.  If $a_m\neq 0$, we have 
$\psi_K\cong a_m\psi_K$ if $m$ is even and $\phi_K\cong a_m\psi_K$
if $m$ is odd, and since $K/F$ is
separable, we apply once  more Proposition \ref{sepiso} to conclude that,
already over $F$, we have
$\psi\cong a_m\psi$ if $m$ is even and $\phi\cong a_m\psi$ if $m$ is odd.
\end{proof}

\begin{thm}\label{sepsim}
Let $\phi$ and $\psi$ be totally singular quadratic forms
or bilinear forms over $F$.  Let $L/F$ be a separable
algebraic field extension.  Then $\phi$ and $\psi$ are
similar over $F$ if and only if $\phi$ and $\psi$ are similar over $L$.
\end{thm}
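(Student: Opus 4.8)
The plan is to derive everything from Lemma \ref{lambda}, which already carries out the Galois-theoretic descent; what remains is an elementary parity argument showing that the coefficients of the minimal polynomial of a similarity factor must contain a nonzero element of $G_F(\phi,\psi)$.

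One direction is trivial: an isometry $\phi\cong c\psi$ over $F$ (with $c\in F^*$) remains an isometry over $L$. For the converse, assume $\phi_L\cong\lambda\psi_L$ for some $\lambda\in L^*$; I want to produce $c\in F^*$ with $\phi\cong c\psi$. First I would set $n=[F(\lambda):F]$ and write the minimal polynomial of $\lambda$ over $F$ as
$$f(X)=X^n+a_1X^{n-1}+\dots+a_{n-1}X+a_n=\prod_{i=1}^n(X-\lambda_i)\in F[X],$$
noting that $F(\lambda)/F$ is finite separable because $L/F$ is. Lemma \ref{lambda} then gives $a_m\in G_F^0(\phi,\psi)$ for every odd $m\in\{1,\dots,n\}$ (and $a_m\in G_F^0(\psi)$ for even $m$, which I will not need). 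So it suffices to find one odd index $m$ with $a_m\neq 0$: since $G_F^0(\phi,\psi)\cap F^*=G_F(\phi,\psi)$, such an $a_m$ satisfies $\phi\cong a_m\psi$, i.e.\ $\phi$ and $\psi$ are similar over $F$.

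To locate this index I would split on the parity of $n$. Since $\lambda\neq 0$ and $\chr F=2$, the constant term $a_n=\prod_i\lambda_i=N_{F(\lambda)/F}(\lambda)$ is a nonzero element of $F$; if $n$ is odd, the index $m=n$ already works. If $n$ is even, I would instead use that $f$, being the minimal polynomial of the separable element $\lambda$, is a separable polynomial, hence $f'\neq 0$, hence (in characteristic $2$) $f\notin F[X^2]$. Thus $f$ has a monomial $X^d$ of odd degree $d$ with nonzero coefficient; since $n$ is even, $d\neq n$, so $1\le d\le n-1$, the coefficient of $X^d$ in $f$ equals $a_{n-d}$, and $n-d$ is odd and lies in $\{1,\dots,n\}$. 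Then $m=n-d$ works, finishing the proof.

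I do not anticipate a real obstacle, since the substantial work — descent of similarity factors along Galois extensions and the additive closure of $G_F^0(\phi,\psi)$ (Lemma \ref{tssim}, Theorem \ref{bilsim}) — is already packaged inside Lemma \ref{lambda}. The one point requiring care is exactly the even-$n$ case: there the obvious candidate $a_n$ only lands in $G_F^0(\psi)$, not in $G_F^0(\phi,\psi)$, and one genuinely needs separability of the minimal polynomial (so that an odd-degree monomial occurs) to extract an odd-indexed nonzero coefficient.
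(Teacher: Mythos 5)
Your proposal is correct and follows essentially the same route as the paper: both reduce to Lemma \ref{lambda} and then extract a nonzero odd-indexed coefficient $a_m$ of the separable minimal polynomial of $\lambda$ (taking $m=n$ when $n$ is odd, and using $f'\neq 0$, i.e.\ $f\notin F[X^2]$, when $n$ is even), concluding $\phi\cong a_m\psi$ over $F$. Your only addition is spelling out why $a_n\neq 0$ in the odd case and the index bookkeeping $a_{n-d}$ in the even case, which the paper leaves implicit.
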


\begin{proof}  Similarity over $F$ trivially implies
similarity over $L$.  Conversely, let $\lambda\in L^*$ be such that
$\phi_L\cong\lambda\psi_L$.  We use the notations from 
Lemma \ref{lambda} applied to this $\lambda$,
Since $L/F$ is separable, $f\in F[X]$ is a separable
polynomial and hence, there is at least
one $m\in\{ 1,\ldots, n\}$ with $m\equiv 1\bmod 2$ and $a_m\neq 0$.
Indeed, if $n$ is odd, we may choose $m=n$, and if $n$ is even,
there must be such an $m$ because otherwise, we would get
$f'=0$ for the formal derivative of $f$, and thus $f$ would
not be separable.   By Lemma \ref{lambda}, we have
$\phi\cong a_m\psi$ over $F$.
\end{proof}

The case of totally singular quadratic forms can be
generalized to so-called $p$-forms.
Let $F$ be a field with $\chr(F)=p>0$.  Recall that a $p$-form
$\phi$ on a finite-dimensional $F$-vector space $V$ is a map
$\phi: V\to F$ satisfying $\phi(x+y)=\phi(x)+\phi(y)$ and 
$\phi(\lambda x)=\lambda^p\phi(x)$ for all $x,y\in V$ and 
all $\lambda\in F$.  For $p=2$, $p$-forms are just
totally singular quadratic forms and one can develop a uniform
theory of such forms for all $p$, including the notions
of isometry, similarity, $G_F(\phi)$, etc..  In particular,
each basis of the underlying vector space of a $p$-form
gives rise to a diagonalization $\phi\cong\qf{a_1,\ldots,a_n}$,
where the latter stands for the form
$a_1x_1^p+\ldots+a_nx_n^p$, $a_i\in F$.  A $1$-fold quasi-Pfister
form is a $p$-form of type $\pff{a}=\qf{1,a,a^2,\ldots,a^{p-1}}$,
and an $n$-fold quasi-Pfister form $\pff{a_1,\ldots,a_n}$ is the
product $\bigotimes_{i=1}^n\pff{a_i}$.  
Those basic properties of $p$-forms
which we need are  the same as for
totally singular quadratic forms and can be proved in essentially
the same way just by replacing $2$ by $p$ in the relevant places,
we refer to \cite{h-pf} for more details.   In particular,
Lemma \ref{tssim}, Corollary \ref{springersep}
and Proposition \ref{sepiso} also hold for $p$-forms.
We are now in a position to adapt the proof of
Theorem \ref{sepsim} (resp. Lemma \ref{lambda}) to $p$-forms.  We state it
as a corollary since its proof is just a slight modification
of the proofs above.

\begin{cor}  Let $F$ be a field of characteristic $p>0$
and let $\phi$ and $\psi$ be $p$-forms of the same dimension
over $F$.  Let $L/F$ be a separable
algebraic field extension.  Then $\phi$ and $\psi$ are
similar over $F$ if and only if $\phi$ and $\psi$ are similar over $L$.
\end{cor}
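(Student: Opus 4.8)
The plan is to run the argument of Lemma \ref{lambda} and Theorem \ref{sepsim} with $2$ replaced by $p$, the one genuinely new ingredient being an elementary divisibility observation used to select the right coefficient of a minimal polynomial. As recorded just above, the $p$-form analogues of Lemma \ref{tssim}, Corollary \ref{springersep} and Proposition \ref{sepiso} are available, and I will freely use three further facts whose proofs are identical to the case $p=2$: for a $p$-form $\tau$ over any field $E$, the set $G_E^0(\tau)$ is a field, in particular additively closed; $E^{*p}\subseteq G_E(\tau)$, since $x\mapsto cx$ is an isometry $c^p\tau\to\tau$; and for a finite Galois extension $K/F$, $\sigma\in\Gal(K/F)$, and $\tau$ defined over $F$, one has $\sigma(\tau_K)\cong\tau_K$, while $\sigma$ respects isometry and scaling of $p$-forms over $K$.

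First, similarity over $F$ trivially yields similarity over $L$, so I would take $\lambda\in L^*$ with $\phi_L\cong\lambda\psi_L$, assuming $\dim\phi=\dim\psi>0$. Let $f(X)=X^n+a_1X^{n-1}+\dots+a_n=\prod_{i=1}^n(X-\lambda_i)\in F[X]$ be the minimal polynomial of $\lambda$ over $F$, with $\lambda=\lambda_1$, the roots $\lambda_i$ distinct (as $f$ is separable), and $a_n=N_{F(\lambda)/F}(\lambda)\neq0$; let $K=F(\lambda_1,\dots,\lambda_n)$ be the splitting field of $f$, a finite Galois --- hence separable algebraic --- extension of $F$. Since $L/F(\lambda)$ and $K/F(\lambda)$ are separable, two applications of the $p$-form version of Proposition \ref{sepiso} give $\phi_K\cong\lambda\psi_K$. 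As in Lemma \ref{lambda}, for each $i$ choose $\sigma_i\in\Gal(K/F)$ with $\sigma_i(\lambda)=\lambda_i$; applying $\sigma_i$ to $\phi_K\cong\lambda\psi_K$ and using $\sigma_i(\phi_K)\cong\phi_K$ and $\sigma_i(\psi_K)\cong\psi_K$ yields $\phi_K\cong\lambda_i\psi_K$, hence $\lambda_i/\lambda\in G_K(\psi_K)$ for all $i$. Therefore every monomial $\lambda_{i_1}\cdots\lambda_{i_m}$ lies in $\lambda^mG_K(\psi_K)$, and summing over all $m$-element subsets and using additive closure of $G_K^0(\psi_K)$, I get $a_m\in\lambda^mG_K^0(\psi_K)$ for every $m$ (signs are $p$-th powers, hence harmless). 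In particular $\lambda^m\in a_mG_K(\psi_K)$ with $a_m\in F^*$ whenever $a_m\neq0$.

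The conclusion then comes from choosing $m$ correctly. The replacement for the odd-index choice in Lemma \ref{lambda} is: there is an $m\in\{1,\dots,n\}$ with $p\nmid m$ and $a_m\neq0$. Indeed, if $p\nmid n$ take $m=n$; and if $p\mid n$ while $p$ divided every index with nonzero coefficient, then every exponent appearing in $f$ would be divisible by $p$, so $f\in F[X^p]$ and $f'=0$, contradicting separability of $f$. Fixing such an $m$ and writing $um+vp=1$ with $u,v\in\ZZ$, we obtain
$$\lambda=(\lambda^m)^u(\lambda^v)^p\in a_m^uG_K(\psi_K)$$
since $K^{*p}\subseteq G_K(\psi_K)$; thus $a_m^u\in\lambda G_K(\psi_K)=G_K(\phi_K,\psi_K)$, i.e.\ $\phi_K\cong a_m^u\psi_K$. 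As $a_m^u\in F^*$ and $\phi,\psi$ are defined over $F$ with $K/F$ separable algebraic, the $p$-form version of Proposition \ref{sepiso} finally gives $\phi\cong a_m^u\psi$ over $F$.

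I expect the last arithmetic step to be the main obstacle: unlike the case $p=2$, where every index is $\equiv0$ or $\equiv1$ modulo $p$ so that Lemma \ref{lambda} produces a similarity factor outright, for odd $p$ the index $m$ can be arbitrary, and one must instead extract an honest element of $F^*$ from the relation $\lambda^m\in a_mG_K(\psi_K)$ by a B\'ezout argument --- which is legitimate exactly because $\gcd(m,p)=1$. This is why the separability hypothesis has to be exploited in the sharpened form $f\notin F[X^p]$ rather than merely as the existence of an odd-degree term.
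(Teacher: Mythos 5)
Your proof is correct and follows essentially the same route as the paper: reduce to the Galois splitting field, show each root $\lambda_i$ lies in $\lambda G_K(\psi_K)$, use additive closure of $G_K^0(\psi_K)$ and separability (i.e.\ $f\notin F[X^p]$) to find a nonzero coefficient $a_m$ with $p\nmid m$, and exploit $\gcd(m,p)=1$ together with $K^{*p}\subseteq G_K(\psi_K)$ to turn $a_m$ into a similarity factor, descending via the $p$-form version of Proposition \ref{sepiso}. The only cosmetic difference is that the paper raises $a_m$ to a power $s$ with $sm=1+kp$ and tracks sums of $sm$-fold products, whereas you first record $a_m\in\lambda^m G_K^0(\psi_K)$ and then apply B\'ezout; the resulting factors $a_m^s$ and $a_m^u$ agree modulo $F^{*p}\subseteq G_F(\psi)$.
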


\begin{proof}  The ``only if''-part is trivial.  For the converse,
let $\lambda\in L^*$ be such that $\phi_L\cong\lambda\psi_L$.
We follow the proof of Lemma \ref{lambda} resp.\ Theorem \ref{sepsim} with the same
notations as there ($f(X)$, $\lambda_i$, $a_i$, etc.).
Firstly, we may assume that
$L=K$ is in fact the splitting field of the minimal polynomial
$f(X)\in F[X]$ of $\lambda$.
We still have that $G_L(\phi_L,\psi_L)=\lambda G_L(\psi_L)$ is a coset
of $G_L(\psi)\leq L^*$ and that $\lambda G_L^0(\psi_L)$ is 
additively closed.  This time,  $L^p\subseteq G_L^0(\psi_L)$, and
therefore, if $r=1+kp$ for some integer $k\geq 0$, then for
any $c_1,\ldots ,c_r\in \lambda G_L^0(\psi_L)$, we have that
$\prod_{i=1}^rc_i\in \lambda G_L^0(\psi_L)$, and any
sum of such products is again in  $\lambda G_L^0(\psi_L)$.

Now since $F(\lambda)/F$ is separable, there is some $a_m\neq 0$ where
$1\leq m\leq n$ with $m\not\equiv 0\bmod p$ (if $n\not\equiv 0\bmod p$,
one may choose $m=n$).    Since $\gcd (p,m)=1$, there are integers
$s,k>0$ such that $sm=1+kp$.  Note that $-1=(-1)^p\in G_L^0(\psi_L)$
and that $a_m$ is a sum of $m$-fold products of
elements from 
$\{ \pm\lambda_1,\ldots,\pm\lambda_n\}\subseteq \lambda G_L^0(\psi_L)$.
Hence $a_m^s$ is a sum of $sm$-fold products of elements 
from $\lambda G_L^0(\psi_L)$, and by what was shown above using that
$sm=1+kp$, we get $0\neq a_m^s\in \lambda G_L(\psi_L)$, hence
$\phi_L\cong a_m^s\psi_L$, and Proposition \ref{sepiso}
(for $p$-forms) implies that $\phi\cong a_m^s\psi$.
\end{proof}

\section{Similarity of quadratic forms over odd degree extensions}

We have seen that totally singular quadratic forms that become similar
over a separable extension are already similar over the base field.
For general quadratic forms, this is generally not true.
For example, suppose that $F$ is not separably quadratically
closed and let $a\in F\setminus\wp(F)$ 
(where $\wp(F)=\{ c^2+c\,|\,c\in F\}$).  Let $L=F(\lambda)$ where
$\lambda=\wp^{-1}(a)$ is a root of $X^2+X+a$.  Then the quadratic form
$q=[1,a]$ is anisotropic, hence not similar to the hyberbolic plane $\HH$,
but over $L$, we have $q_L\cong\HH_L$.

We want to show, that for arbitrary quadratic forms, similarity
over an odd degree extension implies similarity over the base field.
In the proofs for bilinear forms and totally singular quadratic
forms, we have used splitting fields of polynomials.  These types
of arguments won't work in general for arbitrary quadratic forms,
since splitting fields of odd degree polynomials may have even degree
over the base field, and as we have seen, a descent of similarity
from even degree separable extensions to the base field
will generally fail.  Instead, we use transfer arguments.
But for these arguments to work, once again we have to pay
particular attention to the case of totally singular
quadratic forms.

Let us recall some basic facts about transfers of quadratic or
bilinear forms.  For more details and facts, we refer to
\cite[\S 20]{ekm}.  Let $L/F$ be a finite field extension and let
$s:L\to F$ a nonzero $F$-linear functional.  
Let $\phi$ be a bilinear resp. quadratic
form over $L$ with underlying $L$-vector space $V$.
Then we define a bilinear resp. quadratic form $s_*(\phi)$ 
over $F$ as follows.
As underlying $F$-vector space, we take again $V$ considered
as $F$-vector space and write $V_F$ to avoid confusion, in particular,
$\dim_F V_F=[L:F]\dim_L V$, and we define $s_*(\phi)=s\circ \phi$.
Note that the transfer of a nonsingular form is again nonsingular,
and that the transfer respects orthogonal sums and isometries.

We will need Frobenius reciprocity (\cite[Prop.~20.3]{ekm})
in the following situation:
If $b$ is a bilinear form over $L$ and $q$ is a quadratic form
over $F$, then 
$$s_*(b\otimes q_L)\cong s_*(b)\otimes q\quad\mbox{over $F$}.$$

We now fix the following notations.
\begin{notation}\label{nota}
Let $K=F(\lambda)$ be a simple extension of $F$.  Suppose $[K:F]=n$, 
so that $\BB=\{ 1,\lambda,\ldots,\lambda^{n-1}\}$
is an $F$-basis of $K$, and let $s:K\to F$ be the $F$-linear functional
defined by $s(1)=1$ and $s(\lambda^i)=0$ for $1\leq i\leq n-1$.
Assume furthermore that the minimal polynomial $f(X)\in F[X]$ of $\lambda$
is given by $f(X)=X^n+a_1X^{n-1}+\ldots + a_{n-1}X+a_n$. 
Note that we allow $n=1$ to avoid case distinctions.  In that case,
$f(X)=X-\lambda$, so $a_n=a_1=-\lambda=\lambda$.  Note also
that $(-1)^na_n=a_n=N_{K/F}(\lambda)$.  

Furthermore, if $K/F$ is separable, i.e., $f$ is a separable
polynomial, then $\chr(F)=2$ implies that
there exists an odd $k\in \{ 1,\ldots,n\}$
with $a:=a_k\neq 0$.  If $n$ is odd, we may choose $a=a_n$.
\end{notation}

\begin{lem}\label{s*}
With the notations in \ref{nota}, we have:
\begin{enumerate}
\item[(i)] The Gram matrix $G_1=(a_{ij})\in M_n(F)$
of $s_*(\qf{1}_b)$ with respect to the basis $\BB$
is given by $a_{ij}=s(\lambda^{i+j-2})$.
In particular, 
$q_{s_*(\qf{1}_b)}\cong\qf{1,s(\lambda^2),\ldots,s(\lambda^{2n-2})}$.

\item[(ii)] The Gram matrix $G_\lambda=(b_{ij})\in M_n(F)$
of $s_*(\qf{\lambda}_b)$ with respect to the basis $\BB$
is given by $b_{ij}=s(\lambda^{i+j-1})$.
In particular,
$q_{s_*(\qf{\lambda}_b)}\cong\qf{s(\lambda),s(\lambda^3),
\ldots,s(\lambda^{2n-1})}$.
\item[(iii)] In $WF$, the following holds:
$$\begin{array}{rcll}
s_*(\qf{1}_b) & = & \left\{\begin{array}{l}
\qf{1}_b\\ \qf{1,a}_b\end{array}\right. &
\begin{array}{l} \mbox{if $n$ is odd,}\\
\mbox{if $n$ is even.}\end{array}\\
s_*(\qf{\lambda}_b) & = & \left\{\begin{array}{l}
\qf{a}_b\\ 0\end{array}\right. &
\begin{array}{l} \mbox{if $n$ is odd,}\\
\mbox{if $n$ is even.}\end{array}
\end{array}$$
\end{enumerate}
In particular, $s_*(\pff{\lambda}_b)=\pff{a}_b\in WF$ and
$q_{s_*(\pff{\lambda}_b)}\cong\qf{1,s(\lambda),\ldots,s(\lambda^{2n-1})}$.
\end{lem}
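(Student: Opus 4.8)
The plan is to obtain (i) and (ii) straight from the definition of the transfer, and to deduce the Witt‑class formulas in (iii) by observing that the relevant Gram matrices are anti‑triangular with a nonzero constant anti‑diagonal, so their Witt classes can be computed by peeling off metabolic planes.

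First I would write down the Gram matrices. Since $s_*(\phi)=s\circ\phi$, the $(i,j)$‑entry of the Gram matrix of $s_*(\qf{1}_b)$ with respect to $\BB=\{1,\lambda,\dots,\lambda^{n-1}\}$ is $s(1\cdot\lambda^{i-1}\lambda^{j-1})=s(\lambda^{i+j-2})$, and that of $s_*(\qf{\lambda}_b)$ is $s(\lambda\cdot\lambda^{i-1}\lambda^{j-1})=s(\lambda^{i+j-1})$. For the associated quadratic forms I would use that in characteristic $2$ one has $\gamma(x,x)=\sum_i x_i^2\gamma(e_i,e_i)$ for $x=\sum_i x_ie_i$ and any symmetric bilinear form $\gamma$ (the off‑diagonal contributions cancel in pairs), so $q_\gamma$ is diagonalised by the diagonal entries of any Gram matrix of $\gamma$; together with $s(1)=1$ this yields the asserted diagonalisations $\qf{1,s(\lambda^2),\dots,s(\lambda^{2n-2})}$ and $\qf{s(\lambda),s(\lambda^3),\dots,s(\lambda^{2n-1})}$.

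For (iii) I would first record the shape of $c_m:=s(\lambda^m)$: by $F$‑linearity $c_m=0$ for $1\le m\le n-1$, and from $\lambda^n=-\sum_{j=1}^n a_j\lambda^{n-j}$ one gets the recursion $c_m=-\sum_{j=1}^n a_jc_{m-j}$ for $m\ge n$, hence $c_n=-a_n$. Here $a:=a_n\ne0$ (as $\lambda\ne0$) and $a=(-1)^na_n=N_{K/F}(\lambda)$, which for $n$ odd is the choice of $a$ from Notation \ref{nota}. The first row and column of $G_1=(c_{i+j-2})$ are $(1,0,\dots,0)$, so $1\in K$ is orthogonal to $\lambda,\dots,\lambda^{n-1}$ for $s_*(\qf{1}_b)$ and has square $1$; thus $s_*(\qf{1}_b)\cong\qf{1}_b\perp\beta$, where $\beta$ has Gram matrix $(c_{i+j})_{1\le i,j\le n-1}$. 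Now both $\beta$ (of dimension $n-1$) and $G_\lambda=(c_{i+j-1})$ (of dimension $n$) are symmetric and anti‑lower‑triangular with constant anti‑diagonal $-a$: indeed $c_{i+j}=0$ for $i+j\le n-1$ with $c_n=-a$ on the anti‑diagonal $i+j=n$, and $c_{i+j-1}=0$ for $i+j\le n$ with $c_n=-a$ on the anti‑diagonal $i+j=n+1$. In particular each has determinant $\equiv a^{\dim}\pmod{F^{*2}}$, which is nonzero, confirming nonsingularity.

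The hard part is the Witt‑class computation, for which I would use the elementary fact that a nonsingular symmetric bilinear form of dimension $N$ in characteristic $2$ containing a totally isotropic subspace of dimension $m$ is isometric to $\MM(d_1,\dots,d_m)\perp\gamma'$ with $\dim\gamma'=N-2m$ (peel off metabolic planes one by one). For $\beta$ the span of $e_1,\dots,e_{\lfloor(n-1)/2\rfloor}$ is totally isotropic (all $c_{i+j}$ with $i+j\le n-1$ vanish), and for $G_\lambda$ the span of $e_1,\dots,e_{\lfloor n/2\rfloor}$ is totally isotropic (all $c_{i+j-1}$ with $i+j\le n$ vanish); in each case this subspace has dimension $\lfloor(\dim)/2\rfloor$. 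When $\dim$ is even — $\beta$ for $n$ odd, $G_\lambda$ for $n$ even — the form is metabolic, hence $0$ in $WF$, giving $s_*(\qf{1}_b)=\qf{1}_b$ for $n$ odd and $s_*(\qf{\lambda}_b)=0$ for $n$ even. When $\dim$ is odd — $\beta$ for $n$ even, $G_\lambda$ for $n$ odd — one is left with $\MM(d_1,\dots,d_m)\perp\qf{b}_b$; since $\det\MM(d)=1$ in characteristic $2$, $b$ agrees modulo squares with the determinant, i.e.\ with $a^{\dim}\equiv a\pmod{F^{*2}}$ ($\dim$ odd), so $\qf{b}_b\cong\qf{a}_b$. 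This gives $s_*(\qf{1}_b)=\qf{1,a}_b$ for $n$ even and $s_*(\qf{\lambda}_b)=\qf{a}_b$ for $n$ odd. Finally $\pff{\lambda}_b=\qf{1}_b\perp\qf{\lambda}_b$, so $s_*(\pff{\lambda}_b)=s_*(\qf{1}_b)\perp s_*(\qf{\lambda}_b)=\pff{a}_b$ in $WF$ in either parity, with associated quadratic form $q_{s_*(\qf{1}_b)}\perp q_{s_*(\qf{\lambda}_b)}\cong\qf{1,s(\lambda),s(\lambda^2),\dots,s(\lambda^{2n-1})}$ by (i) and (ii) (a totally singular form may be rediagonalised in any order). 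The one real obstacle is thus this Witt‑class step: spotting the anti‑triangular shape, reading off a maximal totally isotropic subspace from $c_m=0$ ($1\le m\le n-1$), and pinning the leftover one‑dimensional summand down to $N_{K/F}(\lambda)$ via the determinant; the rest is bookkeeping with the transfer.
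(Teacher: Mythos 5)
Your proof is correct, but it takes a genuinely different route from the paper, which disposes of part (iii) by citing the standard transfer computations \cite[Lemmas 20.9, 20.12]{ekm} (or \cite[Ch.~VII, Theorems 2.2, 2.3]{lam}, adapted from characteristic $\neq2$ by replacing hyperbolic with metabolic planes) and only notes that (i), (ii) and the diagonalizations are read off from the Gram matrices. You instead reprove the Witt--class formulas from scratch: the vanishing $s(\lambda^m)=0$ for $1\le m\le n-1$ makes the Hankel matrices $\bigl(s(\lambda^{i+j-2})\bigr)$ and $\bigl(s(\lambda^{i+j-1})\bigr)$ anti-triangular with constant anti-diagonal $a_n$, which exhibits a totally isotropic subspace of half the dimension (rounded down); peeling off metabolic planes and identifying the residual one-dimensional summand via the determinant (using $\det\MM(d)=-1=1$ and $\det\equiv a_n^{\dim}\bmod F^{*2}$) yields the four cases. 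The two facts you rely on --- that a totally isotropic $m$-dimensional subspace of a nonsingular bilinear form splits off $\MM(d_1,\ldots,d_m)$, and that the determinant modulo squares is an isometry invariant --- are standard and correctly applied, so the argument is complete; what it buys is a self-contained elementary proof in place of a citation. It also has the merit of pinning the constant down correctly: your computation produces $a_n=N_{K/F}(\lambda)$ throughout, whereas the lemma as printed refers to the $a=a_k$ ($k$ odd, $a_k\neq0$) of Notation \ref{nota}. For $n$ odd these coincide (one may take $a=a_n$), but for $n$ even the printed statement is not literally correct: for $f(X)=X^2+sX+t$ over $\FF_2(s,t)$ one finds $s_*(\qf{1}_b)\cong\qf{1,t}_b$, which differs from $\qf{1,s}_b$ in $WF$. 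Your version with $a_n$ is the right one, and is in fact what Corollary \ref{nondegs*} and Theorem \ref{snp} actually use.
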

\begin{proof} 
The statements about the Gram matrices are clear.  For the computations
regarding $s_*(\qf{1}_b)$ resp. $s_*(\qf{\lambda}_b)$ in $WF$, see
\cite[Lemmas 20.9, 20.12]{ekm} (or \cite[Ch.~VII, Theorems 2.2, 2.3]{lam},
the computation there in characteristic not $2$ can easily be adapted to
characteristic $2$ by replacing hyperbolic planes by metabolic planes).  Finally, 
the diagonalizations of $q_{s_*(\qf{1}_b)}$ resp. $q_{s_*(\qf{\lambda}_b)}$
are gotten by taking the diagonal entries of the respective
Gram matrices.
\end{proof}

\begin{cor}\label{nondegs*}
With the notations in \ref{nota}, let $q$ be a nonsingular
quadratic form over $F$. Then 
$s_*(\pff{\lambda}_b\otimes q_K)\sim \pff{a_n}_b\otimes q$.
If $n$ is odd, then $s_*(q_K)\sim q$ and
$s_*(\lambda q_K)\sim a_nq$.
\end{cor}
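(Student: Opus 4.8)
The plan is to deduce everything from Frobenius reciprocity together with the transfer computations already carried out in Lemma~\ref{s*}. Each of the three quadratic forms over $K$ whose transfer is in question will be written as a bilinear form over $K$ tensored with $q_K$; Frobenius reciprocity then moves the bilinear factor across the transfer, and Lemma~\ref{s*} supplies its Witt class over $F$.

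First I would record the identifications $\lambda q_K\cong\qf{\lambda}_b\otimes q_K$ and $q_K\cong\qf{1}_b\otimes q_K$ over $K$, so that $\pff{\lambda}_b\otimes q_K$, $\lambda q_K$ and $q_K$ are the tensor products of $\pff{\lambda}_b$, $\qf{\lambda}_b$, $\qf{1}_b$ (all bilinear forms over $K$) with $q_K$. Applying Frobenius reciprocity (\cite[Prop.~20.3]{ekm}) with each of these bilinear forms over $K$ and with the quadratic form $q$ over $F$ gives isometries over $F$:
$$s_*(\pff{\lambda}_b\otimes q_K)\cong s_*(\pff{\lambda}_b)\otimes q,\qquad
s_*(\lambda q_K)\cong s_*(\qf{\lambda}_b)\otimes q,\qquad
s_*(q_K)\cong s_*(\qf{1}_b)\otimes q.$$
Next I would observe that, since $q$ is nonsingular, tensoring with $q$ carries Witt equivalent bilinear forms over $F$ to Witt equivalent quadratic forms over $F$: $W_qF$ is a $WF$-module, and a metabolic bilinear form tensored with a nonsingular quadratic form is hyperbolic. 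Hence only the Witt classes of the transferred bilinear forms are relevant, and these are exactly what Lemma~\ref{s*}(iii) provides: by its last assertion $s_*(\pff{\lambda}_b)=\pff{a_n}_b$ in $WF$ (recall that $a=a_n$ when $n$ is odd, and that $a_n=(-1)^na_n=N_{K/F}(\lambda)$), while for $n$ odd one has in addition $s_*(\qf{1}_b)=\qf{1}_b$ and $s_*(\qf{\lambda}_b)=\qf{a_n}_b$ in $WF$.

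Substituting these Witt classes into the three isometries above and using $\qf{a_n}_b\otimes q\cong a_nq$ yields $s_*(\pff{\lambda}_b\otimes q_K)\sim\pff{a_n}_b\otimes q$ for arbitrary $n$, and, when $n$ is odd, $s_*(q_K)\sim q$ and $s_*(\lambda q_K)\sim a_nq$, which is the assertion of the corollary. There is no genuine obstacle here: the statement is essentially formal once Lemma~\ref{s*} is available. The only points that call for care are keeping straight which forms live over $F$ and which over $K$ when invoking Frobenius reciprocity, and checking that replacing the transferred bilinear forms by their Witt classes is harmless after tensoring with the nonsingular $q$ — which comes down to the standard facts that $W_qF$ is a $WF$-module and that the transfer sends hyperbolic (resp. metabolic) forms to hyperbolic (resp. metabolic) forms (see \cite[\S~20]{ekm}).
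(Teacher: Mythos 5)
Your proof is correct and takes essentially the same route as the paper's: write each of the three forms as a bilinear form over $K$ tensored with $q_K$, move that bilinear factor across the transfer by Frobenius reciprocity, and substitute the Witt classes from Lemma~\ref{s*}, using that tensoring with the nonsingular $q$ turns Witt equivalence of bilinear forms into Witt equivalence of quadratic forms. The paper's own proof is a one-line statement of exactly these ingredients, so there is nothing to add beyond noting that the identification $a=a_n$ you invoke is the one already built into Notation~\ref{nota} for $n$ odd.
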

\begin{proof} Note that if $b,b'$ are bilinear forms over $F$
with $b\sim b'$, then $b\otimes q\sim b'\otimes q$.
Now $q_K\cong\qf{1}_b\otimes q_K$,
$\lambda q_K\cong\qf{\lambda}_b\otimes q_K$
and the result follows by Frobenius reciprocity and
Lemma \ref{s*}.
\end{proof}

\begin{lem}\label{tss*}
With the notations in \ref{nota}, assume furthermore that $K/F$
is separable. Let $\phi$ and
$\psi$ be totally singular quadratic forms over $F$ and 
suppose that $\phi_K\cong \lambda\psi_K$.
Let $m$ be a nonnegative integer.  Then
$$s(\lambda^m)\in\left\{
\begin{array}{ll}
G_F^0(\phi)=G_F^0(\psi) & \mbox{if $m\equiv 0\bmod 2$,}\\
G_F^0(\phi,\psi)=aG_F^0(\psi) & \mbox{if $m\equiv 1\bmod 2$}
\end{array}
\right.$$  
(where $a\in F^*$ is chosen as in \ref{nota}).
In particular,
$$\phi\sim s_*(\phi_K)\cong s_*(\lambda\psi_K)\sim a\psi.$$
\end{lem}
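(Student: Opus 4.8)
The plan is to first establish the two auxiliary identities in the statement, then prove the formula for $s(\lambda^m)$ by induction on $m$, and finally deduce the ``in particular'' clause by computing value sets of the transferred forms. For $G_F^0(\phi)=G_F^0(\psi)$: since $\phi_K\cong\lambda\psi_K$, for every $c\in F^*$ we have $\phi\cong c\phi$ over $F$ iff $\phi_K\cong c\phi_K$ over $K$ (Proposition \ref{sepiso}, as $K/F$ is separable) iff $\lambda\psi_K\cong c\lambda\psi_K$ iff $\psi_K\cong c\psi_K$ iff $\psi\cong c\psi$ over $F$, so $G_F(\phi)=G_F(\psi)$, whence $G_F^0(\phi)=G_F^0(\psi)$. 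For $G_F^0(\phi,\psi)=aG_F^0(\psi)$: Lemma \ref{lambda}, applied with $L=K=F(\lambda)$ (whose minimal polynomial is the $f$ of Notation \ref{nota}), gives $a_i\in G_F^0(\psi)$ for even $i$ and $a_i\in G_F^0(\phi,\psi)$ for odd $i$; in particular the element $a=a_k$ of Notation \ref{nota} (with $k$ odd and $a_k\neq 0$, which exists by separability) lies in $G_F(\phi,\psi)$, so $\phi\cong a\psi$ over $F$ and $G_F^0(\phi,\psi)=aG_F^0(\psi)$. By Lemma \ref{tssim}, $G_F^0(\psi)$ is a subfield of $F$ containing $F^2$, so it is closed under addition and multiplication, $aG_F^0(\psi)$ is closed under addition, $G_F^0(\psi)\cdot aG_F^0(\psi)\subseteq aG_F^0(\psi)$, and $aG_F^0(\psi)\cdot aG_F^0(\psi)\subseteq G_F^0(\psi)$ (the last because $a^2\in F^2\subseteq G_F^0(\psi)$); these closure facts are all that is needed below.

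For the formula I induct on $m$. If $m=0$, then $s(\lambda^0)=s(1)=1\in F^2\subseteq G_F^0(\psi)$; if $1\le m\le n-1$, then $s(\lambda^m)=0$, which lies both in $G_F^0(\psi)$ and in $aG_F^0(\psi)$. For $m\ge n$, the relation $f(\lambda)=0$ gives $\lambda^n=\sum_{i=1}^na_i\lambda^{n-i}$ in characteristic $2$, hence $\lambda^m=\sum_{i=1}^na_i\lambda^{m-i}$, and applying the $F$-linear functional $s$ yields $s(\lambda^m)=\sum_{i=1}^na_is(\lambda^{m-i})$ with every exponent $m-i$ in $\{0,\dots,m-1\}$. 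If $m$ is even, then in each summand $i$ and $m-i$ have the same parity: if both are even then $a_i,s(\lambda^{m-i})\in G_F^0(\psi)$, and if both are odd then $a_i,s(\lambda^{m-i})\in aG_F^0(\psi)$, so in either case $a_is(\lambda^{m-i})\in G_F^0(\psi)$ and therefore $s(\lambda^m)\in G_F^0(\psi)$. If $m$ is odd, then in each summand exactly one of $i$ and $m-i$ is odd, so $a_is(\lambda^{m-i})\in G_F^0(\psi)\cdot aG_F^0(\psi)\subseteq aG_F^0(\psi)$, and hence $s(\lambda^m)\in aG_F^0(\psi)$ since that set is additively closed. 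This completes the induction.

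For the ``in particular'' clause, observe that a transfer of a totally singular quadratic form is again totally singular, its associated bilinear form being $b_{s_*(\phi_K)}=s\circ b_{\phi_K}=s\circ 0=0$; thus every form in the chain is totally singular and is determined up to Witt equivalence by its value set (Proposition \ref{qfwittiso}(i)). Writing $\phi\cong\qf{c_1,\dots,c_r}$, one has $D_K^0(\phi_K)=\sum_jc_jK^2$, so by $F$-linearity of $s$, $D_F^0(s_*(\phi_K))=s(D_K^0(\phi_K))=\sum_jc_j\,s(K^2)=s(K^2)\cdot D_F^0(\phi)$, where $s(K^2)$ is the $F^2$-span of $1,s(\lambda^2),\dots,s(\lambda^{2n-2})$ (this can also be obtained from Frobenius reciprocity together with Lemma \ref{s*}(i)). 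By the formula just proved $s(K^2)\subseteq G_F^0(\psi)=G_F^0(\phi)$; since $gD_F^0(\phi)\subseteq D_F^0(\phi)$ for every $g\in G_F^0(\phi)$ (as $g\phi\cong\phi$ when $g\in G_F(\phi)$) while $1\in s(K^2)$, this yields $D_F^0(s_*(\phi_K))=D_F^0(\phi)$, i.e.\ $\phi\sim s_*(\phi_K)$. Since the transfer respects isometries, $s_*(\phi_K)\cong s_*(\lambda\psi_K)$, whence $D_F^0(s_*(\lambda\psi_K))=D_F^0(\phi)=aD_F^0(\psi)=D_F^0(a\psi)$ by $\phi\cong a\psi$, i.e.\ $s_*(\lambda\psi_K)\sim a\psi$, closing the chain. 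The step I expect to be the main obstacle is the parity bookkeeping in the induction: one must keep the parities of $i$ and $m-i$ synchronized and call on exactly the right closure property (multiplicative and additive closure of the field $G_F^0(\psi)$, versus mere additive closure of the coset $aG_F^0(\psi)$) in each case; everything else is routine once those closure properties are recorded.
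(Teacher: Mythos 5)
Your proof is correct. The central claim --- the parity formula for $s(\lambda^m)$ --- is established exactly as in the paper: the identities $G_F^0(\phi)=G_F^0(\psi)$ and $G_F^0(\phi,\psi)=aG_F^0(\psi)$ come from Lemma \ref{lambda}, the base cases are $0\le m\le n-1$ plus the recursion $s(\lambda^m)=\sum_i a_i s(\lambda^{m-i})$, and the induction runs on the same parity bookkeeping using that $G_F^0(\psi)$ is a field and $aG_F^0(\psi)$ an additively closed coset. Where you genuinely diverge is the ``in particular'' chain. The paper uses Frobenius reciprocity and Lemma \ref{s*} to write $s_*(\phi_K)\cong\qf{1,s(\lambda^2),\ldots,s(\lambda^{2n-2})}\otimes\phi$ and $s_*(\lambda\psi_K)\cong\qf{s(\lambda),\ldots,s(\lambda^{2n-1})}\otimes\psi$, and for the second identification it must additionally argue (via $F(\lambda^2)=F(\lambda)$ and the nonvanishing of $s$) that at least one odd-power trace $s(\lambda^m)$ is nonzero, so that the right-hand side is $\sim a\psi$ rather than $\sim 0$. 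You instead compute $D_F^0(s_*(\phi_K))=s(D_K^0(\phi_K))$ directly, show it equals $D_F^0(\phi)$ using only the even-power half of the formula, and then obtain $s_*(\lambda\psi_K)\sim a\psi$ for free from $s_*(\lambda\psi_K)\cong s_*(\phi_K)\sim\phi\cong a\psi$. This is a legitimate shortcut: it bypasses the basis argument for the odd powers entirely (you only use that Witt classes of totally singular forms are determined by their value sets, which is Proposition \ref{qfwittiso}(i) applied to anisotropic parts, and that the transfer of a totally singular form is totally singular). The paper's route has the minor advantage of producing the explicit diagonalizations of the transfers, which it reuses elsewhere, but your argument is self-contained and slightly leaner for the purposes of this lemma.
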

\begin{proof}
We already know that the similarity of $\phi$ and $\psi$ 
over $K$ implies that over $F$, in fact, we have 
$\phi\cong a\psi$ by Lemma \ref{lambda}, and therefore also clearly
$G_F^0(\phi)=G_F^0(\psi)$ and $a\in G_F^0(\phi,\psi)$, i.e.,
$G_F^0(\phi,\psi)=aG_F^0(\psi)$.

We prove the statement about $s(\lambda^m)$ by induction.
By definition of $s$, we have  $s(\lambda^0)=s(1)=1\in G_F^0(\psi)$,
$s(\lambda^m)=0\in  G_F^0(\psi)\cap G_F^0(\phi,\psi)$  for
$1\leq m\leq n-1$, and by Lemma \ref{lambda}
$$s(\lambda^n)=s\bigl(\sum_{i=1}^na_i\lambda^{n-i}\bigr)=
\sum_{i=1}^na_is(\lambda^{n-i})=a_n\in
\left\{\begin{array}{ll}
G_F^0(\phi,\psi) & \mbox{if $n$ is odd,}\\
G_F^0(\psi) &  \mbox{if $n$ is even.}\end{array}\right.$$
So the statement is true for $m\leq n$.  
Suppose the statement is true for all $k<m$ for some $m>n$.
Then 
\begin{equation}\label{slambda}
s(\lambda^m)=s\bigl(\sum_{i=1}^na_i\lambda^{m-i}\bigr)=
\sum_{i=1}^na_is(\lambda^{m-i}).
\end{equation}
Suppose first that $m$ is even.  If $i$ is even, then $m-i$ is
also even, hence, $a_i\in G_F^0(\psi)$ by Lemma \ref{lambda}
and $s(\lambda^{m-i})\in G_F^0(\psi)$ by induction, 
thus $a_is(\lambda^{m-i})\in G_F^0(\psi)$.  
If $i$ and therefore also $m-i$ are odd,  we have
$a_i\in G_F^0(\phi,\psi)$ by Lemma \ref{lambda}
and $s(\lambda^{m-i})\in G_F^0(\phi,\psi)$ by induction, 
and again we get $a_is(\lambda^{m-i})\in G_F^0(\psi)$.
We conclude that each summand on the right in Eq.~\ref{slambda}
is in $G_F^0(\psi)$, and since $G_F^0(\psi)$ is additively
closed, we get that $s(\lambda^m)\in G_F^0(\psi)$.

Using a similar reasoning, if $m$ is odd and if
$i$ is even (resp. odd) then $m-i$
is odd (resp. even) and thus $a_i\in G_F^0(\psi)$ and
$s(\lambda^{m-i})\in G_F^0(\phi,\psi)$
(resp. $a_i\in G_F^0(\phi,\psi)$ and $s(\lambda^{m-i})\in G_F^0(\psi)$)
and it follows that in Eq.~\ref{slambda}, we have 
$a_is(\lambda^{m-i})\in G_F^0(\phi,\psi)$ for each $1\leq i\leq n$
and hence also $s(\lambda^m)\in G_F^0(\phi,\psi)$.

Now 
$$s_*(\phi_K)\cong s_*(\qf{1}_b)\otimes \phi\cong
q_{s_*(\qf{1}_b)}\otimes\phi\cong
\qf{1,s(\lambda^2),\ldots,s(\lambda^{2n-2})}\otimes \phi.$$
Let $m$ be even, $2\leq m\leq 2n-2$.  Then either $s(\lambda^m)=0$
and $s(\lambda^m)\phi\sim 0$, or $s(\lambda^m)\in G_F(\phi)=G_F(\psi)$
and thus $\phi\cong  s(\lambda^m)\phi$.  Note also that
$\phi\perp\phi\sim\phi$ and therefore
$$\qf{1,s(\lambda^2),\ldots,s(\lambda^{2n-2})}\otimes \phi\sim\phi$$
Similarly, we get 
$$s_*(\phi_K)\cong s_*(\qf{\lambda}_b)\otimes \psi\cong 
q_{s_*(\qf{\lambda}_b)}\otimes\psi\cong
\qf{s(\lambda), s(\lambda^3),\ldots,
s(\lambda^{2n-1})}\otimes \psi.$$
Let $m$ be odd, $1\leq m\leq 2n-1$. Then either $s(\lambda^m)=0$ and
$s(\lambda^m)\psi\sim 0$, or $s(\lambda^m)\in G_F(\phi,\psi)$ and thus
$a\psi\cong  s(\lambda^m)\psi$. 
Note that since $\chr(F)=2$ and $F(\lambda)/F$ is separable,
we must have $F(\lambda^2)=F(\lambda)$, i.e., $[F(\lambda^2):F]=n$,
so in particular,
$\{ 1,\lambda^2,\ldots,(\lambda^2)^{n-1}\}$ and therefore also
$\{ \lambda,\lambda^3,\ldots,\lambda^{2n-1}\}$ are  $F$-bases
of $F(\lambda)$.  Since $s$ is a nonzero
$F$-linear functional, there must be at least one such odd $m$
with $s(\lambda^m)\neq 0$, and
similarly as above, we now get
$$\qf{s(\lambda), s(\lambda^3),\ldots,s(\lambda^{2n-1})}\otimes
\psi\sim a\psi.$$
\end{proof}

\begin{theorem}\label{qfoddsim}
Let $\phi$ and $\psi$ be quadratic forms over $F$ of the same
dimension and let $L/F$ be an odd degree field extension.
Then $\phi$ is similar to $\psi$ over $F$ if and only 
if $\phi$ is similar to $\psi$ over $L$.  More precisely,
if $\lambda\in L^*$ with $\phi_L\cong\lambda\psi_L$, then,
with the same notations as in \ref{nota}, $\phi\cong a_n\psi$
(over $F$).
\end{theorem}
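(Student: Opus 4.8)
The plan is to reduce to the case of a simple extension, apply the Scharlau transfer $s_*$ to the isometry $\phi_K\cong\lambda\psi_K$, track the images of the nonsingular and the totally singular (radical) summands separately, assemble from this a Witt equivalence $\phi\sim a_n\psi$, and finally upgrade this to an isometry by a dimension count on radicals via Proposition \ref{qfwittiso}(ii).

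First I would reduce to $L=F(\lambda)$. Put $K=F(\lambda)$; since $[K:F]$ divides the odd number $[L:F]$, both $[K:F]$ and $[L:K]$ are odd. The forms $\phi_K$ and $\lambda\psi_K$ are defined over $K$ (as $\lambda\in K$) and become isometric over $L$, so by Proposition \ref{qfoddiso}, applied to the odd degree extension $L/K$, they are already isometric over $K$. Hence we may assume $L=K=F(\lambda)$ and adopt Notation \ref{nota}; since $n=[K:F]$ is odd, we take $a=a_n=N_{K/F}(\lambda)\neq 0$. Now write $\phi\cong\phi_r\perp\phi_s$ and $\psi\cong\psi_r\perp\psi_s$ with $\phi_r,\psi_r$ nonsingular and $\phi_s,\psi_s$ totally singular, the latter being the restrictions of $\phi,\psi$ to their radicals. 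Base change and scaling by $\lambda$ leave radicals unchanged, so $\phi_K\cong\lambda\psi_K$ forces $\dim\phi_s=\dim\rad\phi=\dim\rad\psi=\dim\psi_s$ (and of course $\dim\phi=\dim\psi$); moreover, restricting the isometry $\phi_K\cong\lambda\psi_K$ to radicals gives $(\phi_s)_K\cong\lambda(\psi_s)_K$.

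The main step is to apply $s_*$ (for the functional $s$ of Notation \ref{nota}) to $\phi_K\cong\lambda\psi_K$, obtaining $s_*(\phi_K)\cong s_*(\lambda\psi_K)$ over $F$. Since $s_*$ respects orthogonal sums, $s_*(\phi_K)=s_*((\phi_r)_K)\perp s_*((\phi_s)_K)$ and $s_*(\lambda\psi_K)=s_*(\lambda(\psi_r)_K)\perp s_*(\lambda(\psi_s)_K)$. As $n$ is odd, Corollary \ref{nondegs*} gives $s_*((\phi_r)_K)\sim\phi_r$ and $s_*(\lambda(\psi_r)_K)\sim a_n\psi_r$; and since $(\phi_s)_K\cong\lambda(\psi_s)_K$ with $\phi_s,\psi_s$ totally singular, Lemma \ref{tss*} (with $a=a_n$, as $n$ is odd) gives $s_*((\phi_s)_K)\sim\phi_s$ and $s_*(\lambda(\psi_s)_K)\sim a_n\psi_s$. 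Because Witt equivalence of quadratic forms is compatible with orthogonal sums — one has $(\alpha\perp\beta)_{\ani}\cong(\alpha_{\ani}\perp\beta_{\ani})_{\ani}$, since adjoining hyperbolic and zero forms does not affect the anisotropic part, which is unique — it follows that $s_*(\phi_K)\sim\phi_r\perp\phi_s=\phi$ and $s_*(\lambda\psi_K)\sim a_n\psi_r\perp a_n\psi_s=a_n\psi$. Combined with $s_*(\phi_K)\cong s_*(\lambda\psi_K)$ this yields $\phi\sim a_n\psi$. Finally, $\phi\sim a_n\psi$ together with $\dim\phi=\dim(a_n\psi)$ and $\dim\phi_s=\dim(a_n\psi)_s$ gives $\phi\cong a_n\psi$ by Proposition \ref{qfwittiso}(ii); the ``only if'' direction of the theorem is trivial, and the ``if'' direction and the precise statement follow.

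I expect the main obstacle to be precisely the passage from the Witt equivalence $\phi\sim a_n\psi$ back to an honest isometry: Witt cancellation fails for singular forms, so one cannot simply strip off the metabolic/hyperbolic and zero contributions appearing after transfer. This is why it is essential to organize the computation so that the totally singular radical part is handled in isolation — through Lemma \ref{tss*} for its transfer and through the radical-dimension comparison for the final step — rather than attempting to cancel directly in $s_*(\phi_K)\cong s_*(\lambda\psi_K)$. A secondary point needing attention is the reduction to a simple extension, which is not automatic because $L/F$ need not be primitive; it relies on the already-established descent of isometry over odd degree extensions (Proposition \ref{qfoddiso}).
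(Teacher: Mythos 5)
Your proposal is correct and follows essentially the same route as the paper: reduce to $K=F(\lambda)$ via Proposition \ref{qfoddiso}, apply the transfer $s_*$ to $\phi_K\cong\lambda\psi_K$, handle the nonsingular and totally singular summands separately via Corollary \ref{nondegs*} and Lemma \ref{tss*}, and upgrade the resulting Witt equivalence $\phi\sim a_n\psi$ to an isometry by comparing dimensions of the radicals (Proposition \ref{qfwittiso}(ii)). Your explicit remark that the isometry restricts to radicals, giving $(\phi_s)_K\cong\lambda(\psi_s)_K$ as the hypothesis for Lemma \ref{tss*}, is a point the paper leaves implicit but is exactly what is needed.
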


\begin{proof}
Write $\phi\cong\phi_r\perp\phi_s$ with $\phi_r$ nonsingular
and $\phi_s$ totally singular, and analogously $\psi\cong\psi_r\perp\psi_s$.
The dimensions of the nonsingular resp. totally singular part do not
change under field extensions and remain unchanged under isometries
or similarities.  Hence, we may in fact assume
$\dim\phi_r=\dim\psi_r$ and $\dim\phi_s=\dim\psi_s$.

The ``only if''-part in the theorem being trivial, assume
conversely that $\phi_L$ is similar to $\psi_L$ and let
$\lambda\in L^*$ be such that $\phi_L\cong\lambda \psi_L$.
Let $K=F(\lambda)$.  Then $L/K$ is an odd degree extension
and by Proposition \ref{qfoddiso}, we have $\phi_K\cong\lambda\psi_K$.
Hence, with the notations in \ref{nota} and
by Corollary \ref{nondegs*} and Lemma \ref{tss*} (where we may choose
$a=a_n$), we obtain
$$\begin{array}{rcl}
\phi & \cong & \phi_r\perp\phi_s\\
 & \sim & s_*((\phi_r)_K)\perp s_*((\phi_s)_K)\\
 & \cong & s_*(\phi_K)\\
 & \cong & s_*(\lambda\psi_K)\\
 & \cong & s_*(\lambda(\psi_r)_K)\perp s_*(\lambda(\psi_s)_K)\\
 & \sim & a_n\psi_r\perp a_n\psi_s\\
 & \cong & a_n\psi,\end{array}$$
so we have $\phi\sim a_n\psi$ and comparing dimensions of the 
nonsingular and totally singular parts, we get $\phi\cong a_n\psi$.
\end{proof}

\section{Scharlau's norm principle}
To treat the norm principle in its fullest generality,
we again need to invoke properties of totally singular
quadratic forms under finite extensions.  We need the
following corollary to Proposition \ref{tsv}.
\begin{lem}\label{tsnp-lem}
Let $q=\qf{a_1,\ldots,a_n}$ be a nonzero totally singular
quadratic form over $F$, let $L/F$ be a finite extension,
and let $\lambda\in L^*$ and $K=F(\lambda)$.
If $\lambda\in G_L(q_L)$, then
$K/F$ is separable
or $L/K$ is inseparable.
\end{lem}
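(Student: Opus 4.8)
The plan is to reduce the statement immediately to Proposition \ref{tsv}(iii), which does essentially all the work. First I would dispose of the trivial case $\lambda\in F$: then $K=F$, and $K/F$ is separable, so there is nothing to prove. Hence from now on assume $\lambda\in L\setminus F$, which is exactly the setting in which Proposition \ref{tsv}(iii) is stated.

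Next I would unwind the hypothesis $\lambda\in G_L(q_L)$ into a statement about represented values. Since $q_L$ and $\lambda q_L$ are totally singular quadratic forms over $L$, the isometry $q_L\cong\lambda q_L$ is equivalent to the equality of value sets $D_L^0(q_L)=D_L^0(\lambda q_L)=\lambda D_L^0(q_L)$ (this is Proposition \ref{qfwittiso}(i), or simply the fact that the isometry class of a totally singular form is determined by its $L^2$-subspace $D_L^0$).

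Then, writing $q\cong\langle a_1,\ldots,a_n\rangle$, I pick an index $i$ with $a_i\neq 0$, which exists since $q$ is nonzero. The $i$-th basis vector gives $a_i\in D_F(q)\subseteq D_L^0(q_L)$, and multiplying by $\lambda$ yields $a_i\lambda\in\lambda D_L^0(q_L)=D_L^0(q_L)$; since $a_i\neq 0$ and $\lambda\neq 0$ this value is nonzero, so in fact $a_i\lambda\in D_L(q_L)$. Applying Proposition \ref{tsv}(iii) with $a:=a_i\in F^*$ then gives that $K/F$ is separable or $L/K$ is inseparable, as claimed.

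I do not expect any genuine obstacle here: the substance is contained in Proposition \ref{tsv}. The only points requiring a little care are to make the reduction to $\lambda\notin F$ explicit (so that Proposition \ref{tsv}(iii) is applicable) and to observe that the product $a_i\lambda$ is a genuine nonzero represented value of $q_L$, not merely an element of $D_L^0(q_L)$.
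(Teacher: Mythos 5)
Your proposal is correct and follows essentially the same route as the paper: handle $\lambda\in F$ trivially, then observe that $q_L\cong\lambda q_L$ forces $a_i\lambda\in D_L(q_L)$ for some nonzero coefficient $a_i$, and invoke Proposition \ref{tsv}(iii). Your extra care in passing from $D_L^0$ to $D_L$ (checking the value is genuinely nonzero) is a small but welcome tightening of the paper's phrasing.
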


\begin{proof}  If $\lambda\in F$ then $K=F$ is trivially separable
over $F$.  If $\lambda\in L\setminus F$, 
the lemma follows from Proposition \ref{tsv}(iii) as $q_L\cong\lambda q_L$ if
and only if $\lambda a_i\in D_L(q_L)$ for all $1\leq i\leq n$
and because at least one $a_i$ is nonzero.
\end{proof}

If $L/F$ is a finite extension, we denote by $N_{L/F}:L\to F$ 
the usual norm map. We recall well known properties.
If $L/K/F$ is a tower of finite extensions,
then $N_{L/F}=N_{K/F}\circ N_{L/K}$, and if $x\in K$, then
$N_{L/F}(x)=N_{K/F}(x)^{[L:K]}$.

As a consequence of Lemma \ref{tsnp-lem},
we obtain Scharlau's norm principle for
totally singular quadratic forms.

\begin{cor}\label{tsnp-cor}
Let $q$ be a totally singular quadratic form over $F$ and
let $L/F$ be a finite field extension.  Then
$N_{L/F}(G_L(q_L))\subseteq G_F(q)$.
\end{cor}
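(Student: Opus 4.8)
The plan is to reduce the statement to Lemma \ref{lambda} (applied with $\phi=\psi=q$) together with Lemma \ref{tsnp-lem}, using multiplicativity of the norm in a tower. First I would dispose of the trivial case: if $q$ is the zero form then $G_F(q)=F^*$ and there is nothing to prove, so assume $q$ is nonzero. Fix $\lambda\in G_L(q_L)$, put $K=F(\lambda)$, let $n=[K:F]$, and let $f(X)=X^n+a_1X^{n-1}+\dots+a_n\in F[X]$ be the minimal polynomial of $\lambda$ over $F$. Since $\lambda\in K$ we have $N_{L/K}(\lambda)=\lambda^{[L:K]}$, so that $N_{L/F}(\lambda)=N_{K/F}(\lambda)^{[L:K]}=a_n^{[L:K]}$, using $N_{K/F}(\lambda)=(-1)^na_n=a_n$ in characteristic $2$ (here $a_n\neq 0$ because $\lambda\neq 0$ forces $N_{K/F}(\lambda)\neq 0$). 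So the goal becomes $a_n^{[L:K]}\in G_F(q)$, and since $G_F(q)$ is a group containing $F^{*2}$, a case split on the parity of $[L:K]$ suffices.

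If $[L:K]$ is even, then $a_n^{[L:K]}=\bigl(a_n^{[L:K]/2}\bigr)^2\in F^{*2}\subseteq G_F(q)$ and we are done. The substantive case is $[L:K]$ odd. Here I would first observe that oddness forces $L/K$ to be separable: in characteristic $2$ the inseparability degree of a finite extension is a power of $2$, so if it divides the odd number $[L:K]$ it must equal $1$. Now Lemma \ref{tsnp-lem} applies to $\lambda\in G_L(q_L)$ and tells us that $K/F$ is separable or $L/K$ is inseparable; the second alternative has just been ruled out, so $K/F$ is separable, and therefore $L/F$ is separable too. At this point Lemma \ref{lambda} applies to the separable extension $L/F$, the element $\lambda$, and the pair $\phi=\psi=q$: since $G_F^0(\phi,\psi)=G_F^0(q,q)=G_F^0(q)$ in this situation, its even/odd dichotomy collapses and it yields $a_m\in G_F^0(q)$ for every $m\in\{1,\dots,n\}$. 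In particular $a_n\in G_F^0(q)$, and as $a_n\neq 0$ this gives $a_n\in G_F(q)$, whence $a_n^{[L:K]}\in G_F(q)$ since $G_F(q)$ is a group. This establishes $N_{L/F}(G_L(q_L))\subseteq G_F(q)$.

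I expect the only genuinely delicate point to be the odd case. The key realization is that oddness of $[L:K]$ makes $L/K$ separable, which is precisely what allows Lemma \ref{tsnp-lem} to be invoked to exclude $K/F$ being inseparable; without this, $L/F$ need not be separable, so Lemma \ref{lambda} would be unavailable, while the ``norm is a square'' shortcut also breaks down for odd $[L:K]$. Thus Lemma \ref{tsnp-lem} is used exactly once, to close this one gap.
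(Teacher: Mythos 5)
Your proof is correct and follows essentially the same route as the paper: dispose of the zero form, write $N_{L/F}(\lambda)=a_n^{[L:K]}$, handle even $[L:K]$ via $F^{*2}\subseteq G_F(q)$, and in the odd case use Lemma \ref{tsnp-lem} to force separability so that Lemma \ref{lambda} yields $a_n\in G_F(q)$. If anything, your ordering of the case split (parity of $[L:K]$ first) makes the verification of the separability hypothesis of Lemma \ref{lambda} slightly more explicit than the paper's version.
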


\begin{proof}  If $q$ is a zero form, then
$G_L(q_L)=L^*$ and clearly 
$N_{L/F}(G_L(q_L))=N_{L/F}(L^*)\subseteq F^*=G_F(q)$.

So assume $q$ is not a zero form and let $\lambda\in G_L(q_L)$
and put $K=F(\lambda)$ and let $a=N_{K/F}(\lambda)\in F^*$.
With the notations as in \ref{nota}, we have $a=a_n$.
By Lemma \ref{tsnp-lem}, we have $K/F$ separable or
$L/K$ inseparable, and the latter implies that $[L:K]$ is even.  
If $K/F$ is separable then
$a\in G_F(q)=G_F(q,q)$  by Lemma \ref{lambda}.
If $[L:K]$ is even, then $a^{[L:K]}\in F^{*2}\subseteq G_F(q)$.
In any case,
$a^{[L:K]}=N_{L/F}(\lambda)\in G_F(q)$.
\end{proof}

We now get the full version of Scharlau's norm principle
for arbitrary quadratic or bilinear forms.

\begin{theorem}\label{snp}
Let $\phi$ be a quadratic or bilinear form over $F$
and let $L/F$ be a finite field extension.
Then $N_{L/F}(G_L(\phi_L))\subseteq G_F(\phi)$.
\end{theorem}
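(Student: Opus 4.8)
The plan is to reduce the general case to the three cases already handled: nonsingular quadratic forms, totally singular quadratic forms, and bilinear forms, using the decomposition of an arbitrary quadratic form together with the structure results on transfers.

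First I would treat the bilinear case. If $\phi$ is a bilinear form, write $\phi\cong\phi_m\perp\phi_{\ani}$ with $\phi_m$ metabolic and $\phi_{\ani}$ anisotropic. Let $\lambda\in G_L(\phi_L)$ and put $K=F(\lambda)$, $a=a_n=N_{K/F}(\lambda)\in F^*$ with the notations of \ref{nota}. Since $\phi_L\cong\lambda\phi_L$, Milnor's criterion gives $(\phi_{\ani})_L\cong\lambda(\phi_{\ani})_L$ and $q_{\phi_L}\cong\lambda q_{\phi_L}$, i.e.\ $\lambda\in G_L(q_{\phi_L})$. The totally singular part is already covered by Corollary \ref{tsnp-cor}, so $N_{L/F}(\lambda)\in G_F(q_\phi)$. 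For the anisotropic bilinear part one uses the transfer together with Frobenius reciprocity: $s_*((\phi_{\ani})_K)\cong s_*(\qf{1}_b)\otimes\phi_{\ani}$ and $s_*(\lambda(\phi_{\ani})_K)\cong s_*(\qf{\lambda}_b)\otimes\phi_{\ani}$, and Lemma \ref{s*} computes these in $WF$ as $\phi_{\ani}$ (resp.\ $\qf{1,a}_b\otimes\phi_{\ani}$) and $\qf{a}_b\otimes\phi_{\ani}$ (resp.\ $0$) according to the parity of $n$. In either parity one gets $\phi_{\ani}\cong a\phi_{\ani}$ in $WF$, hence $a\in G_F((\phi_{\ani}))$ up to passing to anisotropic parts; combining with $a\in G_F(q_\phi)$ and Milnor's criterion yields $a\in G_F(\phi)$. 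Then $N_{L/F}(\lambda)=a^{[L:K]}=a\cdot a^{[L:K]-1}$, and since $a\in G_F(\phi)$ and $F^{*2}\subseteq G_F(\phi)$, one concludes $N_{L/F}(\lambda)\in G_F(\phi)$, handling separately the parity of $[L:K]$ exactly as in the proof of Corollary \ref{tsnp-cor}.

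Next I would treat the quadratic case. Write $\phi\cong\phi_r\perp\phi_s$ with $\phi_r$ nonsingular and $\phi_s$ totally singular. Given $\lambda\in G_L(\phi_L)$, set $K=F(\lambda)$, $a=a_n=N_{K/F}(\lambda)$. Since isometries and similarities preserve the radical, $\phi_L\cong\lambda\phi_L$ forces $(\phi_s)_L\cong\lambda(\phi_s)_L$, so $\lambda\in G_L((\phi_s)_L)$ and Corollary \ref{tsnp-cor} gives $N_{L/F}(\lambda)\in G_F(\phi_s)$. For the nonsingular part, first reduce to $L=K$: by Proposition \ref{qfoddiso}-type reasoning we cannot do this directly for even-degree extensions, so instead I would proceed via the transfer over the full extension $L/F$ using a functional $s:L\to F$, or — more cleanly — first observe $\phi_L\cong\lambda\phi_L$ already over $K$ is not automatic for even $[L:K]$, so one must argue on $L$ itself. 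Here the right tool is Corollary \ref{nondegs*} applied to $q=\phi_r$ with the extension $L/F$ and the functional from Notation \ref{nota} adapted to $L$: $s_*((\phi_r)_L)\sim\phi_r$ and $s_*(\lambda(\phi_r)_L)\sim\pff{N_{L/F}(\lambda)}_b\otimes\phi_r=N_{L/F}(\lambda)\cdot\phi_r$ in $W_qF$. From $\phi_L\cong\lambda\phi_L$ one gets, after transfer and Frobenius reciprocity, $\phi_r\perp\phi_s\sim N_{L/F}(\lambda)\phi_r\perp s_*(\lambda(\phi_s)_L)$ in the Witt sense; combining with $N_{L/F}(\lambda)\in G_F(\phi_s)$ and comparing dimensions of nonsingular and totally singular parts yields $\phi\cong N_{L/F}(\lambda)\phi$, i.e.\ $N_{L/F}(\lambda)\in G_F(\phi)$.

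The main obstacle will be the nonsingular quadratic part over a general finite extension: unlike the odd-degree situation of Theorem \ref{qfoddsim}, we cannot pass from $L$ down to $K=F(\lambda)$, so one must run the transfer argument over all of $L/F$ at once and keep careful track of the totally singular summand, whose transfer $s_*(\lambda(\phi_s)_L)$ is not simply controlled by a Witt-class computation (Witt cancellation fails). The resolution is to use Lemma \ref{tsnp-lem}: if $K/F$ is separable, Lemma \ref{lambda} directly gives $a_n\in G_F(\phi_s)$ and one can split $L/F$ through $K$ with $[L:K]$ accounting for a square factor; if $K/F$ is inseparable, then $[L:K]$ is even and $N_{L/F}(\lambda)=N_{K/F}(\lambda)^{[L:K]}\in F^{*2}\subseteq G_F(\phi)$ outright. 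So the genuinely new content is just the combination of the bilinear Milnor-criterion argument with the transfer computations of Lemma \ref{s*}, and the quadratic case follows by the decomposition $\phi\cong\phi_r\perp\phi_s$ together with Corollary \ref{nondegs*}, Corollary \ref{tsnp-cor}, and the norm tower formula.
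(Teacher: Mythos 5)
Your bilinear case is essentially sound: combining the totally singular norm principle (Corollary \ref{tsnp-cor}) for $q_\phi$ with a transfer computation for the anisotropic part and then reassembling via $G_F(\beta)=G_F(\beta_{\ani})\cap G_F(q_\beta)$ works, and is a legitimate variant of the paper's route (the paper instead shows $K/F$ is separable via Lemma \ref{tsnp-lem}, descends $\phi_L\cong\lambda\phi_L$ to $K$ by Proposition \ref{sepiso}, and quotes Lemma \ref{lambda} directly). One small point you should make explicit there: the descent from $L$ to $K=F(\lambda)$ needs $L/K$ separable, which you get for free because the only case that matters is $[L:K]$ odd --- when $[L:K]$ is even, $N_{L/F}(\lambda)=N_{K/F}(\lambda)^{[L:K]}\in F^{*2}\subseteq G_F(\phi)$ and there is nothing to prove. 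This one-line reduction, which is the paper's opening move, also dissolves the ``main obstacle'' you describe in the quadratic case: once $[L:K]$ is odd, Proposition \ref{qfoddiso} descends $\phi_L\cong\lambda\phi_L$ to $\phi_K\cong\lambda\phi_K$, so there is never any need to run the transfer over all of $L/F$. Your proposed case split on separability of $K/F$ is the wrong dichotomy: $K/F$ separable does not prevent $[L:K]$ from being even, and conversely the relevant fact from Lemma \ref{tsnp-lem} is that $K/F$ inseparable forces $[L:K]$ even.

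The genuine gap is in the quadratic case. First, your transfer formula is wrong as stated: $\pff{c}_b\otimes\phi_r\cong\phi_r\perp c\phi_r$ is not $c\phi_r$, and Corollary \ref{nondegs*} gives $s_*(\lambda q_K)\sim a_nq$ only when $n=[K:F]$ is odd; for general $n$ one only has $s_*(\pff{\lambda}_b\otimes q_K)\sim\pff{a_n}_b\otimes q$, which is why the paper packages the hypothesis as $(\phi_s)_K\sim\pff{\lambda}_b\otimes(\phi_r)_K\perp(\phi_s)_K$ before transferring. Moreover, the whole machinery of Notation \ref{nota}, Lemma \ref{s*} and Corollary \ref{nondegs*} is built for the simple extension $F(\lambda)/F$, not for an arbitrary $L$. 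Second, and more importantly, you flag that $s_*(\lambda(\phi_s)_L)$ ``is not simply controlled by a Witt-class computation'' but you never control it: since Witt cancellation fails against the singular part, knowing $N_{L/F}(\lambda)\in G_F(\phi_s)$ and a Witt relation with an uncomputed summand does not let you conclude $\phi\cong a\phi$ by ``comparing dimensions''. What is needed is precisely Lemma \ref{tss*} (which you never invoke): for $K/F$ separable and $(\phi_s)_K\cong\lambda(\phi_s)_K$ it shows $s_*((\phi_s)_K)\sim\phi_s\cong a\phi_s$, by an induction on $s(\lambda^m)$ through the fields $G_F^0(\phi_s)$ and the coset $G_F^0(\phi_s,\phi_s)=aG_F^0(\phi_s)$. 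With that in hand one transfers the relation above to get $\phi_s\sim\pff{a}_b\otimes\phi_r\perp\phi_s$, and Proposition \ref{qfwittiso}(ii) --- not a dimension count --- then yields $\phi\cong a\phi$. Without Lemma \ref{tss*} the quadratic case of your argument does not close.
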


\begin{proof}
Let $K=F(\lambda)$ and put
$a=N_{K/F}(\lambda)$. If $[L:K]$ is even, then 
\begin{equation}\label{even}
N_{L/F}(\lambda)=a^{[L:K]}\in F^{*2}\subseteq G_F(\phi)
\end{equation}
and we are done (no matter whether $\phi$ is a quadratic
or bilinear form).  So for the remainder of the proof,
let us assume that $[L:K]$ is odd.

\medskip

\noindent\emph{The bilinear case}. 
Suppose that $\phi$ is a bilinear form.
If $\phi$ is hyperbolic, i.e., $q_\phi$ is a zero form,
then clearly $G_L(\phi_L)=L^*$ and
$N_{L/F}(G_L(\phi_L))=N_{L/F}(L^*)\subseteq F^*=G_F(\phi)$.

So assume that $\phi$ is not hyperbolic, i.e., 
$q_\phi$ is not a zero form.
Then $\phi_L\cong\lambda\phi_L$ implies in particular
$(q_\phi)_L\cong \lambda (q_\phi)_L$.   
Now $[L:K]$ is odd and hence $L/K$ is separable, so on the one hand,
$\lambda\in G_L((q_\phi)_L)$ implies 
that $K/F$ is separable by Lemma \ref{tsnp-lem},
and on the other hand $\phi_L\cong\lambda\phi_L$
implies $\phi_K\cong\lambda\phi_K$ by Proposition \ref{sepiso}.
Using Lemma \ref{lambda} and the notations there
(with $K$ instead of $L$ and $a=a_n$), we see that
$a\in G_F(\phi)=G_F(\phi,\phi)$.  Hence, again,
$$N_{L/F}(\lambda)=a^{[L:K]}\in G_F(\phi).$$

\medskip

\noindent\emph{The quadratic case.}
Now assume that $\phi$ is a quadratic form and write
$\phi\cong\phi_r\perp\phi_s$ with $\phi_r$ nonsingular
and $\phi_s$ totally singular. 
Recall that we assume that $[L:K]$ is odd, so
$\phi_L\cong \lambda\phi_L$ implies 
$\phi_K\cong\lambda\phi_K$ by Proposition \ref{qfoddiso},
and by Proposition \ref{qfwittiso}, we get 
$(\phi_s)_K\cong\lambda (\phi_s)_K$ and
\begin{equation}\label{snp-eq0}
(\phi_s)_K \sim  (\phi_r)_K\perp\lambda (\phi_r)_K\perp (\phi_s)_K
\cong\pff{\lambda}_b\otimes(\phi_r)_K\perp (\phi_s)_K.
\end{equation}
We use the notations in \ref{nota} and we obtain
that 
\begin{equation}\label{snp-eq1}
a\phi_s\cong\phi_s\sim s_*((\phi_s)_K).
\end{equation}
In fact, this is obvious if $\phi_s$
is the zero form. 
If $\phi_s$ is not the zero form, then 
$(\phi_s)_K\cong \lambda (\phi_s)_K$
forces $K/F$ to be separable 
by Lemma \ref{tsnp-lem}, and hence we can apply 
Lemmas \ref{lambda} and \ref{tss*}
to get Eq.~\ref{snp-eq1}.

Invoking Corollary \ref{nondegs*} and 
Eqs.~\ref{snp-eq0}, \ref{snp-eq1}, we get
$$\begin{array}{rcl}
a\phi_s\cong\phi_s & \sim &  s_*\bigl((\phi_s)_K\bigr)\\
 & \sim & s_*\bigl(\pff{\lambda}_b\otimes (\phi_r)_K\perp (\phi_s)_K\bigr)\\
 & \sim & s_*\bigl(\pff{\lambda}_b\otimes (\phi_r)_K\bigr)
\perp s_*\bigl((\phi_s)_K\bigr)\\
 & \sim & \pff{a}_b\otimes\phi_r\perp \phi_s\\
 & \sim & \phi_r\perp a\phi_r\perp\phi_s.
\end{array}$$
Thus, we can apply Proposition \ref{qfwittiso} to conclude that
$\phi\cong\phi_r\perp\phi_s\cong a\phi_r\perp a\phi_s\cong a\phi$.
Hence $a\in G_F(\phi)$, and again we get
$N_{L/F}(\lambda)=a^{[L:K]}\in G_F(\phi)$.
\end{proof}

\end{document}